\definecolor{colorcita}{RGB}{21,86,130}
\definecolor{colorref}{RGB}{5,10,177}
\definecolor{colorweb}{RGB}{177,6,38}
\DeclareMathOperator{\co}{co}
\DeclareMathOperator{\id}{id}
\DeclareMathOperator{\spn}{span}
\newcommand{\N}{\mathbb{N}}
\newcommand{\C}{\mathbb{C}}
\newtheorem{teo}{Theorem}[section]
\newtheorem{lema}[teo]{Lemma}
\newtheorem{coro}[teo]{Corollary}
\newtheorem{prop}[teo]{Proposition}
\theoremstyle{definition}
\newtheorem{example}[teo]{Example}
\newtheorem{nota}[teo]{Remark}
\title{Mean ergodic composition operators on spaces of holomorphic functions on a Banach space}
\author{David Jornet \and Daniel Santacreu \and Pablo Sevilla-Peris}
\date{}
\begin{document}
\maketitle

\begin{abstract}
We study mean ergodic composition operators on infinite dimensional spaces of holomorphic functions of different types when defined on the unit ball of a Banach or a Hilbert space: that of all holomorphic functions, that of holomorphic functions of bounded type and that of bounded holomorphic functions. Several examples in the different settings are given.
\end{abstract}

\footnotetext[0]{\textit{Keywords:} Holomorphic function on Banach space; Composition operator; Power bounded; Mean ergodic; Bounded type\\
Insitut Universitari de Matem\`atica Pura i Aplicada. Universitat Polit\`ecnica de Val\`encia. cmno Vera sn, 46022. Val\`encia. Spain.\\
djornet@mat.upv.es, dasanfe5@posgrado.upv.es,  psevilla@mat.upv.es}

\section{Introduction}

 If $X$ and $Y$ are Banach spaces and $U \subseteq X$ is open, then a function $f : U \to Y$ is \textit{holomorphic} if it is Fr\'echet differentiable at every point of $U$. If the open unit ball $B_X$ of $X$ satisfies $B_X\subseteq U$ and $\varphi : B_{X} \to B_{X}$ is a holomorphic self-map on $B_X$, the associated composition  operator is defined by $C_{\varphi} (f) := f \circ \varphi$. The function $\varphi$ is called \emph{symbol} of the composition operator. When $Y=\C$ and $U=B_X$, the space of holomorphic functions $f:B_X\to \C$ is simply denoted by $H(B_X)$. Our aim is to study the power boundedness and (uniform) mean ergodicity of the composition operator $C_\varphi:H(B_X)\to H(B_X)$  in terms of the properties of the symbol $\varphi$ when $H(B_X)$ is equipped with its natural topology, the compact-open topology, and also when $H(B_X)$ is replaced by the space of holomorphic functions of bounded type $H_b(B_X)$ or that of bounded holomorphic functions  $H^\infty(B_{X})$. We study also the case when $X$ is a Hilbert space for each of the settings considered above.

Several authors have studied different properties of composition operators on spaces of holomorphic functions on the unit ball of a Banach space. See, for instance, \cite{aron1997compact,galindo2010fredholm,galindo2000weakly,garcia2004composition} and the references therein. However, it seems that there is no previous literature about the dynamics of such operators.  The present work can be considered a sequel of \cite{JoSaSe20} by the same authors, where we study some dynamical properties (especially mean ergodicity) of composition operators in spaces of homogeneous polynomials. As in \cite{JoSaSe20}, the   motivation and inspiration of our investigation comes from several previous works, as \cite{BoDo2011A}, where the authors characterise those composition operators $C_{\varphi}:H(U)\to H(U)$ which are power bounded, where  $H(U)$ is the space of holomorphic functions on a connected domain of holomorphy $U$ of $\C^{d}$. It was proved in \cite{BoDo2011A} that $C_{\varphi}$ is power bounded if and only if it is (uniformly) mean ergodic, and this happens if and only if the symbol $\varphi$ has stable orbits. On the other hand, if the domain is the unit disc, it was characterised in \cite{BGJJ2016m} when $C_{\varphi}$ is mean ergodic or uniformly mean ergodic on the disc algebra or on the space of bounded holomorphic functions in terms of the asymptotic behaviour of the symbol. Power boundedness and (uniform) mean ergodicity of weighted composition operators on the space of holomorphic functions on the unit disc was analysed in \cite{BGJJ2016mw} in terms of the symbol and the  multiplier. In \cite{K2019p}  power boundedness and  mean ergodicity for (weighted) composition operators on function spaces defined by local properties was studied in a very general framework which extends previous work. In particular, it permits to characterise (uniform) mean ergodicity for composition operators on a large class of function spaces which are Fr\'echet-Montel spaces when equipped with the compact-open topology. Here, the results of \cite{K2019p} do not apply since $H(B_X)$, $H_b(B_X)$ or $H^\infty(B_{X})$ are not  Fr\'echet-Montel spaces. Other recent contributions to this topic can be found in \cite{Keshavarzi}, where mean ergodicity of 
composition operators on the space of bounded holomorphic functions on the $n$-dimensional Euclidean ball is studied, and in \cite{JA}, where the authors consider composition operators on weighted spaces of holomorphic functions on the disc.

The paper is organised as follows. In Section~\ref{preliminares} we give some basic definitions and fix the notation used throughout the paper. Moreover, we recall a specific result for Hilbert spaces which is useful along the text. In Section~\ref{stable}  we analyse some properties of stable and $B_X$-stable orbits. In Section~\ref{H} we study the mean ergodicity of the composition operator in the space of holomorphic functions on the unit ball of a Banach space. In Section~\ref{Hb} we consider the same problem for holomorphic functions of bounded type, while in Section~\ref{Hi} we consider the space of bounded holomorphic functions. In each section we treat the Hilbert-space case also.

\section{Preliminaries} \label{preliminares}

All along this paper $E$ will always denote a locally convex Hausdorff space. The set of continuous seminorms on $E$ is denoted by $\Gamma_{E}$ and $L(E)$ is the space of continuous linear maps $T: E \to E$. We denote $T^{0}= \id$ (the identity), $T^{1} = T$ and, for $n \in \mathbb{N}$, we write $T^{n} = T^{n-1} \circ T$ (that is, the $n$-th composition of $T$ with itself). With 
this notation, the $n$-the Ces\`aro mean of the sequence $(T^k)_{k=0}^\infty$ is defined as
\[ 
T_{[n]}:= \frac{1}{n}\sum_{k=0}^{n-1} T^k \,. 
\]
The operator $T$ is said to be
\textit{power bounded} if $\{T^{n} \colon n \geq 0 \}$ is equicontinuous. It is called \textit{mean ergodic} if there is $L\in L(E)$ such that $(T_{[n]} x)_{n}$ is convergent (in $E$) to $Lx$ for every $x \in E$. 
It is \emph{uniformly mean ergodic} if  $(T_{[n]})_{n}$ converges uniformly on the bounded subsets of $E$ (we will refer to the topology so defined as \textit{the topology of bounded convergence of $L(E)$}). Finally, we say that $T$ is \emph{topologizable} if for each $q\in \Gamma_E$ there exist a sequence $(a_n)_{n\in\N}$ of positive numbers and $p\in \Gamma_E$ such that 
\begin{equation} \label{topologizable}
q(a_nT^n x)\leq p(x), 
\end{equation}
for all $x\in E$ and all $n\in\N$ (see  \cite{bonet2007topologizable,zelazko2007topological}).

Also $X$ will always denote a Banach space and $H$ a Hilbert space. We write $E'$, $X'$ and $ H'$ for the corresponding dual spaces. The set $B_X$ is the open unit ball in $X$. On $B_{H}$ (recall that $H$ is a Hilbert space) there is a  group of automorphisms that, in some sense, plays the role of M\"obius transforms in the unit disc. We give here the definition and a basic property that we  use later.

From \cite[Proposition~1]{AutoInHilbert} we know that, given $a \in B_{H}$, the linear operator $\gamma_{a}: H \to  H$ defined by
\[
\gamma_{a}(x):=\frac{1}{1+v(a)}a\langle x,a\rangle + v(a)x \,,
\]
where $v(a)=\sqrt{1-\Vert a\Vert^2}$, satisfies $\Vert\gamma_{a}(x)\Vert \leq \Vert x\Vert$ for all $x\in H$ and $\gamma_{a}(a)=a$. Once we have this, for each $a \in B_{H}$ we can define  an automorphism $\alpha_{a} : B_{H} \to B_{H}$ by doing
\begin{equation}\label{alpha}
\alpha_a(x)=\gamma_{a} \Big(\frac{a-x}{1-\langle x, a\rangle} \Big) \,.
\end{equation}
This satisfies $\alpha_a(0)=a$, $\alpha_a(a)=0$, and $\alpha_a^{-1}=\alpha_a$ (the first two  follow by direct computation, and the third one proceeding as in  \cite[Proposition~1]{AutoInHilbert}). The following result follows from \cite[(9')]{AutoInHilbert}; we include a proof for the sake of completeness.

\begin{lema}\label{auto rB}
For each $0<r<1$ there is $0<\rho<1$ such that 
\begin{equation} \label{castillo}
	\alpha_a (rB_{H}) \subseteq \rho B_{H},
\end{equation}
for every $a\in rB_H$.
\end{lema}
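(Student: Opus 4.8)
The plan is to reduce everything to an exact Möbius-type norm identity for $\alpha_a$ and then estimate its two factors separately. The naive route---bounding $\Vert\alpha_a(x)\Vert\leq\frac{\Vert a-x\Vert}{|1-\langle x,a\rangle|}$ by means of the contraction property $\Vert\gamma_a(z)\Vert\leq\Vert z\Vert$ alone---is doomed, since this quotient can exceed $1$: taking $x\perp a$ with $\Vert a\Vert,\Vert x\Vert$ close to $1$ gives $\frac{\Vert a-x\Vert}{|1-\langle x,a\rangle|}=\sqrt{\Vert a\Vert^2+\Vert x\Vert^2}$. So I would instead compute $\Vert\gamma_a(z)\Vert$ exactly.

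First I would expand $\Vert\gamma_a(z)\Vert^2$ directly from the definition. Writing $s=\langle z,a\rangle$ and using $v(a)^2=1-\Vert a\Vert^2$, a short computation (the two cross terms combine and the coefficient of $|s|^2$ collapses to $1$) should give
\begin{equation*}
\Vert\gamma_a(z)\Vert^2=|\langle z,a\rangle|^2+(1-\Vert a\Vert^2)\Vert z\Vert^2,\qquad z\in H;
\end{equation*}
in particular this re-proves $\Vert\gamma_a(z)\Vert\leq\Vert z\Vert$ via Cauchy--Schwarz. Next I would apply this with $z=\frac{a-x}{1-\langle x,a\rangle}$, which is precisely the argument of $\gamma_a$ in the definition of $\alpha_a$, so that $\alpha_a(x)=\gamma_a(z)$. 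Substituting $\langle z,a\rangle=\frac{\Vert a\Vert^2-\langle x,a\rangle}{1-\langle x,a\rangle}$ and $\Vert z\Vert^2=\frac{\Vert a-x\Vert^2}{|1-\langle x,a\rangle|^2}$, placing everything over the common denominator $|1-\langle x,a\rangle|^2$ and simplifying, I expect the numerator to collapse to $(1-\Vert a\Vert^2)(1-\Vert x\Vert^2)$, yielding the crucial identity
\begin{equation*}
1-\Vert\alpha_a(x)\Vert^2=\frac{(1-\Vert a\Vert^2)(1-\Vert x\Vert^2)}{|1-\langle x,a\rangle|^2}.
\end{equation*}

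This algebraic simplification is the main (though routine) obstacle: the cancellations only work out after careful bookkeeping with the complex inner product, using $\langle a,x\rangle=\overline{\langle x,a\rangle}$ and $\Vert a-x\Vert^2=\Vert a\Vert^2+\Vert x\Vert^2-2\re\langle x,a\rangle$. Once the identity is in hand, the conclusion is immediate.

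Finally, with $a,x\in rB_H$ I would estimate the two factors on the right. By Cauchy--Schwarz $|\langle x,a\rangle|\leq\Vert x\Vert\,\Vert a\Vert<r^2$, so the denominator satisfies $|1-\langle x,a\rangle|^2\leq(1+|\langle x,a\rangle|)^2<(1+r^2)^2$, while the numerator obeys $(1-\Vert a\Vert^2)(1-\Vert x\Vert^2)>(1-r^2)^2$. Hence $1-\Vert\alpha_a(x)\Vert^2>\frac{(1-r^2)^2}{(1+r^2)^2}$, and setting
\begin{equation*}
\rho:=\sqrt{1-\frac{(1-r^2)^2}{(1+r^2)^2}}
\end{equation*}
gives $0<\rho<1$ (positive because $1-r^2<1+r^2$) with $\Vert\alpha_a(x)\Vert<\rho$ for all $x\in rB_H$ and all $a\in rB_H$, which is exactly \eqref{castillo}.
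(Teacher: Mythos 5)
Your proof is correct and follows essentially the same route as the paper: both rest on the identity $1-\Vert\alpha_a(x)\Vert^2=\frac{(1-\Vert a\Vert^2)(1-\Vert x\Vert^2)}{|1-\langle x,a\rangle|^2}$ and then bound its numerator and denominator using $\Vert a\Vert,\Vert x\Vert<r$. The only differences are cosmetic: the paper cites this identity from \cite{AutoInHilbert} while you derive it yourself via the exact formula $\Vert\gamma_a(z)\Vert^2=|\langle z,a\rangle|^2+(1-\Vert a\Vert^2)\Vert z\Vert^2$ (a correct computation), and your sharper denominator bound $(1+r^2)^2$ in place of the paper's $(1+r)^2$ just produces a slightly smaller admissible $\rho$ than the paper's $\sqrt{1-(1-r)^2}$.
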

\begin{proof}
For $x\in B_H$ with $\|x\|<r$, we put $y:=\alpha_a(x)$. Straightforward (though long) computation (see \cite[(2)]{AutoInHilbert}) yields
\[
1-\|y\|^2=\frac{(1-\|a\|^2)(1-\|x\|^2)}{|1-\langle x, a\rangle|^2}.
\]
Since 
\[
|1-\langle x, a\rangle|^2\le (1+\|x\|\|a\|)^2\le (1+r)^2,
\]
we deduce $1-\|y\|^2\ge (1-r^2)^2 (1+r)^{-2}= (1-r)^2$, which gives the conclusion for $\rho:=\sqrt{1-(1-r)^2}$.
\end{proof}

A mapping $P:X \to Y$ between two Banach spaces $X$ and $Y$ is a (continuous) \emph{$m$-homogeneous polynomial} if there is a continuous $m$-linear mapping $L : X \times \cdots \times X \to Y$ so that $P(x) = L(x, \ldots , x)$ for every $x \in X$. We write $\mathcal{P} (^{m} X)$ for the space of all $m$-homogeneous polynomials $P:X\to \C$, which endowed with the norm $\Vert P \Vert = \sup_{\Vert x \Vert \leq 1} \Vert P(x) \Vert$ is a Banach space.

We refer the reader to \cite{Me98,MeVo97} for general theory of functional analysis and Banach space theory, to \cite{libro_2019, Di99, mujica} for the theory of holomorphic functions on Banach spaces and to \cite{BaMa09,GEPe11} for topics related with linear dynamics.

\section{Stable and $B_{X}$-stable orbits} \label{stable}

Given an open set $U \subseteq X$, following \cite{BoDo2011A} a self map $f : U \to U$ is said to have \emph{stable orbits} if for every compact subset $K$ of $U$ there is a compact subset $L\subset U$ such that $f^n(K) \subseteq L$ for every $n\in\N$ or equivalently, if  $\overline{\bigcup_{n=0}^{\infty} f^{n}(K)}$ is compact in $U$ for every compact set $K \subseteq U$. This property was already used in
\cite{BoDo2011A} or \cite{BGJJ2016mw} to characterise power boundedness and/or mean ergodicity of weighted composition operators. 

We introduce now a sort of `bounded type' counterpart. A set $A \subseteq U$ is $U$-bounded if it is bounded and has positive distance to the boundary of $U$ (whenever $U=X$, the notions of `bounded' and  `$X$-bounded' coincide). Then we say that $f$ has \emph{$U$-stable orbits} if for every $U$-bounded set $A\subset U$ there is a $U$-bounded set $L\subset U$ such that $f^n(A) \subseteq L$ for every $n\in\N$ (equivalently, $\bigcup_{n=0}^{\infty} f^{n}(A)$ is $U$-bounded for every $U$-bounded set $A \subseteq U$).

\begin{nota}\label{point-so}
The orbit $\{f^{n} (x) \colon n \in \mathbb{N}  \}$ of each point $x \in U$ is relatively compact if $f$ has stable orbits and $U$-bounded if $f$ has $U$-stable orbits.
\end{nota}

The notion of a function having $B_{X}$-stable orbits (we only deal with the case $U=B_{X}$) seems to be new. However, it is not hard to find functions with this property. In fact,  the following well known version of the Schwarz lemma gives immediate examples. 

\begin{lema} \label{schwarz}
Let  $\varphi : B_{X} \to B_{X}$ be holomorphic so that 	$\varphi(0)=0$. Then
$\Vert \varphi(x) \Vert \leq \Vert x \Vert$ for every $x \in B_{X}$.
\end{lema}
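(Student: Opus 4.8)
The plan is to reduce the infinite-dimensional statement to the classical one-variable Schwarz lemma by slicing along complex lines and testing with a norm-attaining functional. First I would dispose of the trivial cases: if $x=0$ or $\varphi(x)=0$ the desired inequality holds automatically, so I fix $x\in B_X$ with $x\neq 0$ and $\varphi(x)\neq 0$, and set $u:=x/\|x\|$, a unit vector. For $\lambda\in\D$ we have $\|\lambda u\|=|\lambda|<1$, so $\lambda u\in B_X$ and $\varphi(\lambda u)\in B_X$ is well defined.

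The key step is to pass to a scalar function. By the Hahn--Banach theorem choose $x^*\in X'$ with $\|x^*\|=1$ and $x^*(\varphi(x))=\|\varphi(x)\|$, and define
\[
h(\lambda):=x^*\big(\varphi(\lambda u)\big),\qquad \lambda\in\D.
\]
Then $h:\D\to\C$ is holomorphic, being the composition of the affine map $\lambda\mapsto \lambda u$ (holomorphic from $\D$ into $B_X$), the holomorphic map $\varphi$, and the continuous linear functional $x^*$. Moreover $h(0)=x^*(\varphi(0))=x^*(0)=0$, and for every $\lambda\in\D$ one has $|h(\lambda)|\le\|x^*\|\,\|\varphi(\lambda u)\|\le 1$ because $\varphi(\lambda u)\in B_X$.

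With these properties in hand, the classical Schwarz lemma on $\D$ applies to $h$ and yields $|h(\lambda)|\le|\lambda|$ for all $\lambda\in\D$. Finally I would evaluate at $\lambda=\|x\|\in(0,1)$, noting that $\|x\|\,u=x$, so that $h(\|x\|)=x^*(\varphi(x))=\|\varphi(x)\|$. The Schwarz estimate then gives
\[
\|\varphi(x)\|=|h(\|x\|)|\le\|x\|,
\]
which is exactly the claim. I do not anticipate a genuine obstacle here, since this is a standard reduction; the only points requiring care are the correct invocation of Hahn--Banach to make $h(\|x\|)$ equal to the full norm $\|\varphi(x)\|$ (rather than a mere lower bound), and the verification that the scalar slice $h$ is holomorphic on all of $\D$, which follows from the stability of holomorphy under composition with linear maps.
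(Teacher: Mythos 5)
Your proof is correct and follows essentially the same route as the paper: slice along the complex line through $x$, compose with a functional of norm at most one, and invoke the classical one-variable Schwarz lemma. The only cosmetic difference is that the paper applies the lemma to the whole family $\big\{\lambda \mapsto x^{*}\big(\varphi(\lambda x/\Vert x \Vert)\big) : \Vert x^{*}\Vert \leq 1,\ 0<\Vert x\Vert<1\big\}$ and recovers $\Vert \varphi(x)\Vert$ as a supremum, whereas you fix a single norm-attaining functional via Hahn--Banach, which is an equally valid packaging of the same argument.
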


\begin{proof}
	It is enough to apply the classical Schwarz lemma to the family of functions $$\Big\{ [\lambda \in \mathbb{D} \mapsto x^{*} \big( \varphi( \lambda x/\Vert x \Vert ) \big ) ]  \colon x^{*} \in X^{*} , \, \Vert x^{*} \Vert \leq 1, \, 0 < \Vert x \Vert< 1 \Big\}.$$
\end{proof}

\begin{prop}\label{fix0}
Let $\varphi: B_X \to  B_X$ be a holomorphic mapping such that $\varphi(0)=0$, then $\varphi$ has $B_X$-stable orbits.
\end{prop}
\begin{proof}
Lemma~\ref{schwarz}  clearly implies $\Vert\varphi^n(x)\Vert \leq \Vert x\Vert$ for all $n\in \N$ and all $x\in B_X$ and, therefore, for each $0<r<1$ we have 
\[ 
\varphi^n(rB_X)\subseteq rB_X, 
\]
for all $n\in \N$. This gives the claim.	
\end{proof}

As a consequence, every continuous homogeneous polynomial $P: X \to X$ (in particular every linear operator) with $\Vert P \Vert \leq 1$ has $B_{X}$-stable orbits. 

\begin{example} \label{shifts}
If $X$ is either $c_{0}$ or $\ell_{p}$ with $1 \leq p \leq \infty$ we consider the forward and backward shifts operators $F,B: X \to X$ defined as
\begin{equation} \label{def:shifts}
F(x_1,x_2,\dots)=(0,x_1,x_2,\dots) \,\text{ and } \,  B(x_1,x_2,\dots)=(x_2,x_3,\dots) \,.
\end{equation}
Both are linear and clearly have norm less or equal $1$, hence have $B_{X}$-stable orbits. It is not difficult to see that $B$ has stable orbits (just using the characterisation of compact sets in $c_{0}$ or in $\ell_{p}$; see for instance \cite[p.~6]{diestel1984sequences}). For the forward shift, however, we have that the set
\[ 
\left\{F^n\bigg(\frac{e_1}{2}\bigg):n\in\N \right\}= \left\{\frac{e_n}{2}:n>1 \right\}
\]
is not relatively compact and, by Remark~\ref{point-so}, $F$ does not have stable orbits.

We may also consider the mapping $\phi : B_X \to B_X$ defined as
\[ 
\phi(x_1,x_2,\dots)=\big( \tfrac{x_1 +1 }{2},0,0,\dots \big) \,.
\]
Note that $\phi^n(0)=\big( \sum_{i=1}^{n}\frac{1}{2^i},0,0,\dots \big)$ and, therefore, 
\[ 
\lim_{n\to\infty} \Vert \phi^n(0) \Vert = \lim_{n\to\infty}\sum_{i=1}^{n}\frac{1}{2^i}=1. 
\]
Hence $\phi$ has neither stable nor $B_{c_0}$-stable orbits.
\end{example}

We do not know so far whether or not having stable orbits implies having $B_{X}$-stable orbit. However, if $T:X \to X$ is continuous and linear and has stable orbits, then it is power bounded (because $\{T^{n} x  \}_{n}$ is bounded for every $x\in X$), and a simple computation shows that, then $T$ has $X$-stable orbits. 

\subsection{The Hilbert-space case}

If $H$ is a Hilbert space, for each $a \in B_{H}$ the automorphism $\alpha_{a} : B_{H} \to B_{H}$ defined in \eqref{alpha} satisfies  $\alpha_{a}^{-1}=\alpha_{a}$.
Hence
\[
\bigcup_{n=0}^{\infty} \alpha_{a}^{n} (A) = A \cup \alpha_{a} (A) \,,
\]
for every $A \subseteq B_{H}$. If $A$ is compact, $\alpha_{a}(A)$ is again compact, and if $A$ is $B_{H}$-bounded, by Lemma~\ref{auto rB}, so also is $\alpha_{a}(A)$. This shows that $\alpha_{a}$ has both stable and $B_{H}$-stable orbits.

Using these automorphisms, in the case of Hilbert spaces we can extend Proposition~\ref{fix0} showing that every holomorphic function with a fixed point has $B_{H}$-stable orbits.

\begin{lema} \label{compos stable}
If $\varphi: B_H\rightarrow B_H$ has stable orbits (respectively $B_{H}$-stable orbits), then the mapping $\psi= \alpha_a\circ \varphi \circ \alpha_a$ has stable orbits (respectively $B_{H}$-stable orbits) for every $a \in B_{H}$.
\end{lema}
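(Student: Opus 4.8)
The plan is to reduce the powers of $\psi$ to conjugated powers of $\varphi$ and then transport the relevant compactness or boundedness through $\alpha_a$. The crucial algebraic fact is that $\alpha_a$ is an involution, $\alpha_a\circ\alpha_a=\id$, as recorded after \eqref{alpha}. Using this, I would first prove by induction the telescoping identity
\[
\psi^n=\alpha_a\circ\varphi^n\circ\alpha_a,\qquad n\in\N.
\]
Indeed, $\psi=\alpha_a\circ\varphi\circ\alpha_a$ is the case $n=1$, and if the identity holds for $n$ then
\[
\psi^{n+1}=\psi\circ\psi^n=(\alpha_a\circ\varphi\circ\alpha_a)\circ(\alpha_a\circ\varphi^n\circ\alpha_a)=\alpha_a\circ\varphi^{n+1}\circ\alpha_a,
\]
since the inner $\alpha_a\circ\alpha_a$ cancels.

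For the stable-orbit case, I would take an arbitrary compact set $K\subseteq B_H$ and set $K':=\alpha_a(K)$, which is compact because $\alpha_a$ is continuous. Since $\varphi$ has stable orbits, there is a compact $L'\subseteq B_H$ with $\varphi^n(K')\subseteq L'$ for all $n$. Applying $\alpha_a$ and the telescoping identity gives $\psi^n(K)=\alpha_a(\varphi^n(K'))\subseteq\alpha_a(L')=:L$, and $L$ is compact as the continuous image of a compact set. Hence $\psi$ has stable orbits.

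The $B_H$-stable case runs along the same lines, replacing \emph{compact} by \emph{$B_H$-bounded} throughout. The only additional input needed is that $\alpha_a$ maps $B_H$-bounded sets to $B_H$-bounded sets, which is precisely what the paragraph preceding the lemma deduces from Lemma~\ref{auto rB}: any $B_H$-bounded set $A$ lies in some $rB_H$ with $0<r<1$, and after enlarging $r$ (keeping $r<1$) we may also assume $a\in rB_H$, whence $\alpha_a(A)\subseteq\rho B_H$. Thus $A':=\alpha_a(A)$ is again $B_H$-bounded, the hypothesis yields a $B_H$-bounded $L'$ containing every $\varphi^n(A')$, and $\psi^n(A)\subseteq\alpha_a(L')=:L$ is $B_H$-bounded by the same preservation property.

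I expect the only point requiring care to be the involution identity and the bookkeeping of the conjugation; once $\psi^n=\alpha_a\circ\varphi^n\circ\alpha_a$ is in hand, everything reduces to the already-established facts that $\alpha_a$ preserves compact sets (by continuity) and $B_H$-bounded sets (by Lemma~\ref{auto rB}). There is no genuine obstacle beyond checking that the conjugating automorphism can be passed in and out of the orbits cleanly.
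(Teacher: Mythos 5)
Your proof is correct and follows essentially the same route as the paper: both rest on the identity $\psi^n=\alpha_a\circ\varphi^n\circ\alpha_a$ (from $\alpha_a^2=\id$) and then transport compactness through continuity of $\alpha_a$, respectively $B_H$-boundedness through Lemma~\ref{auto rB}. Your extra care in noting that one may enlarge $r<1$ so that $a\in rB_H$ before invoking Lemma~\ref{auto rB} is a detail the paper leaves implicit, but it does not change the argument.
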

\begin{proof}
If $K \subseteq B_{H}$ is compact, then $\alpha_{a} (K)$ is compact and, having $\varphi$ stable orbits, we can find a compact set $L \subseteq B_{H}$ so that $\varphi^{n} (\alpha_{a} (K)) \subseteq L$ for each $n\in\N$. Then $\alpha_{a} (L) \subseteq B_{H}$ is compact and $\alpha_{a} \big(\varphi^{n} (\alpha_{a} (K))  \big) \subseteq \alpha_{a} (L)$. Since  $\psi^{n} = \alpha_{a} \circ \varphi^{n} \circ \alpha_{a}$ (because $\alpha_{a}^{2} = \id$), $\psi$ has stable orbits. 

The argument if $\varphi$ has $B_{H}$-stable orbits is exactly the same, using that by Lemma~\ref{auto rB} $\alpha_{a}(A)$ is $B_{H}$-bounded for every $B_{H}$-bounded $A$.
\end{proof}

\begin{prop}\label{phi-bar-Bso}
Let $\varphi: B_H\rightarrow B_H$ be a holomorphic mapping with a fixed point. 
Then $\varphi$ has $B_H$-stable orbits.
\end{prop}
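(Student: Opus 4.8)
The plan is to reduce the general case to the already-settled case of a fixed point at the origin by conjugating $\varphi$ with the automorphism $\alpha_a$, exploiting that $\alpha_a$ is an involution sending $0$ to the fixed point.

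First I would let $a\in B_H$ be a fixed point of $\varphi$, so that $\varphi(a)=a$, and form the conjugate $\psi:=\alpha_a\circ\varphi\circ\alpha_a$. This $\psi$ is holomorphic, being a composition of holomorphic self-maps of $B_H$. Using $\alpha_a(0)=a$ and $\alpha_a(a)=0$, a direct computation gives $\psi(0)=\alpha_a\big(\varphi(\alpha_a(0))\big)=\alpha_a(\varphi(a))=\alpha_a(a)=0$, so $\psi$ fixes the origin. Hence Proposition~\ref{fix0} applies and shows that $\psi$ has $B_H$-stable orbits.

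Next I would transfer this property back to $\varphi$. Because $\alpha_a^{-1}=\alpha_a$, i.e. $\alpha_a^2=\id$, conjugating $\psi$ once more by $\alpha_a$ recovers $\varphi$: indeed $\alpha_a\circ\psi\circ\alpha_a=\alpha_a\circ(\alpha_a\circ\varphi\circ\alpha_a)\circ\alpha_a=\varphi$. Applying Lemma~\ref{compos stable} to $\psi$ (which has $B_H$-stable orbits) then yields that $\alpha_a\circ\psi\circ\alpha_a=\varphi$ has $B_H$-stable orbits, which is exactly the desired conclusion.

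I do not expect a serious obstacle here, since the whole argument rests on the two facts already established: that fixing the origin forces $B_H$-stable orbits (Proposition~\ref{fix0}), and that conjugation by $\alpha_a$ preserves $B_H$-stable orbits (Lemma~\ref{compos stable}). The only points to keep straight are that $\alpha_a$ is an involution and that it carries $0$ to the fixed point $a$, so that the conjugation simultaneously produces a map fixing $0$ and is its own inverse, allowing the property to be pushed forward and pulled back.
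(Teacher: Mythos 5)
Your proposal is correct and is essentially identical to the paper's own proof: conjugate by the involution $\alpha_a$ to obtain a map fixing $0$, apply Proposition~\ref{fix0}, and then use Lemma~\ref{compos stable} to transfer the $B_H$-stable orbits back to $\varphi$. The only difference is that you spell out the step $\alpha_a\circ\psi\circ\alpha_a=\varphi$ explicitly, which the paper leaves implicit in the phrase ``Lemma~\ref{compos stable} gives the conclusion.''
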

\begin{proof}
Take $a\in B_H$ with $\varphi(a)=a$.
The holomorphic function $\overline{\varphi}= \alpha_a\circ \varphi \circ \alpha_a : B_{H} \to B_{H}$ satisfies $\overline{\varphi}(0)= \alpha_a (\varphi ( \alpha_a(0)))=\alpha_a (\varphi (a))=\alpha_a (a)=0$. Then, by Proposition~\ref{fix0} the function $\overline{\varphi}$ has $B_H$-stable orbits, and Lemma~\ref{compos stable} gives the conclusion.
\end{proof}

\section{The space of holomorphic functions} \label{H}

Given a Banach space $X$, we define $H(B_{X})$ as the space of all holomorphic functions $f : B_{X} \to \mathbb{C}$, endowed with the topology $\tau_{0}$ of uniform convergence on compact sets. This is a locally convex Hausdorff space.

\begin{nota} \label{hubbard}
If $\varphi:B_{X} \to B_{X}$ is holomorphic, then the composition operator $C_{\varphi} : H(B_{X}) \to H(B_{X})$ is clearly well defined (and continuous). On the other hand, if $C_{\varphi}$ is well defined, then $x' \circ \varphi$ is holomorphic for every $x' \in X'$ and, by Dunford's theorem (see e.g. \cite[Theorem~15.45]{libro_2019}), $\varphi$ is holomorphic. Then, there is no restriction to assume that $\varphi$ is holomorphic.
\end{nota}

\begin{nota}\label{santsaens}
Suppose that the composition operator $C_{\varphi}$ is topologizable.
Given any $f \in H(B_{X})$, a straightforward computation using \eqref{topologizable} with $f^{m}$ (for $m \in \mathbb{N}$) and taking the $m$-root shows that for every compact set $K \subseteq B_{X}$ there is some compact $L \subseteq B_{X}$ so that
\[
\sup_{x \in K} \vert f( \varphi^{n} (x) ) \vert \leq \frac{1}{a_{n}^{1/m}} \sup_{x \in L} \vert f(x) \vert \,.
\]
Letting $m \to \infty$ yields
\[
\sup_{x \in K} \vert f( \varphi^{n} (x) ) \vert \leq \sup_{x \in L} \vert f(x) \vert \,,
\]
and in particular $C_{\varphi}$ is power bounded. Our aim now is to show that in fact this implication can be reversed and characterised in terms of the symbol.

\end{nota}

Following \cite{vieira} and \cite{carandomuro}, given a family $\mathcal{F}$ of $\mathbb{C}$-valued holomorphic functions defined on an open set $U$, the $\mathcal{F}$-hull of $A \subseteq U$ is denoted 
\begin{equation} \label{Fhull}
\widehat{A}_{\mathcal{F}}= \{ x\in U : |f(x)|\leq \sup_{y\in A} |f(y)|, \text{ for all } f \in \mathcal{F}  \} \,.
\end{equation}

Stable orbits of the symbol is the property that characterises the power boundedness of the composition operator.

\begin{teo}\label{PBinH}
Let $\varphi:B_{X} \to B_{X}$ be holomorphic. The following assertions are equivalent:
\begin{enumerate}
	\item\label{PBinH1} $\varphi$ has stable orbits on $B_{X}$.
	\item \label{PBinH2} $C_\varphi :H(B_{X}) \to H(B_{X})$ is power bounded.
	\item \label{PBinH3} $\big(\frac{1}{n} C_{\varphi}^{n} \big)_{n}$ is equicontinuous in $L(H(B_{X}))$.
	\item \label{PBinH4} $C_{\varphi} :H(B_{X}) \to H(B_{X})$ is topologizable.		
\end{enumerate}
\end{teo}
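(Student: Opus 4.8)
The plan is to establish the cyclic chain of implications $\ref{PBinH1}\Rightarrow\ref{PBinH2}\Rightarrow\ref{PBinH3}\Rightarrow\ref{PBinH4}\Rightarrow\ref{PBinH1}$, working throughout with the fundamental system of seminorms $p_K(f)=\sup_{x\in K}|f(x)|$ (for $K\subseteq B_X$ compact) that generates $\tau_0$. The computation underlying everything is
\[
p_K(C_\varphi^n f)=\sup_{x\in K}|f(\varphi^n(x))|=\sup_{z\in\varphi^n(K)}|f(z)|=p_{\varphi^n(K)}(f),
\]
so that equicontinuity statements about the iterates $C_\varphi^n$ translate directly into comparisons between the sets $\varphi^n(K)$ and a fixed compact set $L$.

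For $\ref{PBinH1}\Rightarrow\ref{PBinH2}$: if $\varphi$ has stable orbits, then given compact $K$ there is a compact $L\subseteq B_X$ with $\varphi^n(K)\subseteq L$ for all $n$, whence $p_K(C_\varphi^n f)=p_{\varphi^n(K)}(f)\le p_L(f)$ for every $f$ and every $n$; this is exactly equicontinuity of $\{C_\varphi^n\}$, i.e. power boundedness. The implication $\ref{PBinH2}\Rightarrow\ref{PBinH3}$ is immediate, since $p_K(\tfrac1n C_\varphi^n f)\le\tfrac1n C\,p_L(f)\le C\,p_L(f)$. For $\ref{PBinH3}\Rightarrow\ref{PBinH4}$ I would simply read off topologizability from the equicontinuity of $(\tfrac1n C_\varphi^n)_n$, taking $a_n=\tfrac1n$ for $n\geq 1$ and absorbing the constant and the $n=0$ term into the dominating seminorm $p$.

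The substantial implication is $\ref{PBinH4}\Rightarrow\ref{PBinH1}$. Assume $C_\varphi$ is topologizable and fix a compact $K$; there are positive numbers $(a_n)$ and a compact $L$ with $a_n\,p_{\varphi^n(K)}(f)\le p_L(f)$ for all $f\in H(B_X)$ and all $n$. The key device is a power trick exploiting that $H(B_X)$ is an algebra and that the $\tau_0$-seminorms are multiplicative on powers: applying the inequality to $f^j$ and using $p_{\varphi^n(K)}(f^j)=p_{\varphi^n(K)}(f)^j$ and $p_L(f^j)=p_L(f)^j$ gives $a_n^{1/j}\,p_{\varphi^n(K)}(f)\le p_L(f)$, and letting $j\to\infty$ (so $a_n^{1/j}\to1$) yields the constant-free estimate
\[
\sup_{z\in\varphi^n(K)}|f(z)|\le\sup_{y\in L}|f(y)|\qquad\text{for all }f\in H(B_X),\ n\in\N.
\]
Specializing to $f=x'$ with $x'\in X'$, every $z\in\bigcup_n\varphi^n(K)$ satisfies $|x'(z)|\le\sup_{y\in L}|x'(y)|$ for all $x'\in X'$, so by the Hahn--Banach separation theorem $z$ lies in the closed absolutely convex hull $Q$ of $L$. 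Since $L$ is compact in the open ball, $s_0:=\max_{y\in L}\|y\|<1$, so $Q\subseteq\{\,\|x\|\le s_0\,\}\subseteq B_X$; and in a Banach space the closed absolutely convex hull of a compact set is compact (Mazur, since $Q$ is the closed convex hull of the compact set $\overline{\D}\cdot L$). Hence $\overline{\bigcup_n\varphi^n(K)}\subseteq Q$ is compact in $B_X$, i.e. $\varphi$ has stable orbits.

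I expect the main obstacle to be precisely the engine of $\ref{PBinH4}\Rightarrow\ref{PBinH1}$: recognizing that the multiplicative structure of $H(B_X)$ lets the power trick upgrade the weak, weight-dependent domination coming from topologizability into genuine constant-free equicontinuity (this is what compensates for the missing Montel property and forces all four conditions to collapse together), and then invoking Mazur's theorem to convert the functional-analytic conclusion $z\in Q$ into actual relative compactness \emph{inside} $B_X$. The only routine care needed is the seminorm bookkeeping in the three easy implications and the verification that $Q$ stays bounded away from the boundary of $B_X$.
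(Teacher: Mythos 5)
Your proof is correct, and your three easy implications \ref{PBinH1}$\Rightarrow$\ref{PBinH2}$\Rightarrow$\ref{PBinH3}$\Rightarrow$\ref{PBinH4} coincide with the paper's; the genuine divergence is in \ref{PBinH4}$\Rightarrow$\ref{PBinH1}. The paper keeps the constants $1/a_n$ from topologizability and works with the holomorphically convex hull $L=\widehat{W}_{H(B_{X})}$ of the dominating compact set $W$, quoting \cite[Corollary~10.7 and Theorem~11.4]{mujica} for the nontrivial fact that this hull is compact and contained in $B_{X}$; the algebra structure enters only locally, in a contradiction argument: if some $\varphi^{n_0}(x_0)$ lay outside the hull, a separating function raised to a high power $m$ would defeat the fixed constant $1/a_{n_0}$. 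You instead deploy the power trick globally, applying the estimate to $f^{j}$ and letting $j\to\infty$ to erase the constants altogether; the resulting constant-free domination lets you test against linear functionals only, so the holomorphic hull gets replaced by the closed absolutely convex hull of the dominating compact set, whose compactness is Mazur's theorem and whose containment in $B_{X}$ is elementary since $\max_{y\in L}\|y\|<1$. Your route is therefore more self-contained: it bypasses the holomorphic-convexity machinery of \cite{mujica} entirely, at the harmless cost of enclosing the orbits in a possibly larger compact set. It is worth noting that your bipolar argument is essentially the device the paper itself uses in the bounded-type setting (Lemma~\ref{HolomorphicallyConvexBounded}), so your proof transfers verbatim to Theorem~\ref{pbddHb}, with Mazur's theorem replaced by the fact that the closed absolutely convex hull of a $B_{X}$-bounded set is $B_{X}$-bounded.
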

\begin{proof}
\ref{PBinH1} $\Rightarrow$ \ref{PBinH2} If $\varphi$ has stable orbits, given a compact set $K \subseteq B_{X}$  there is a compact set $L\subseteq B_{X}$ such that $\varphi^n(K) \subseteq L$ for every $n\in\N$. Hence
\[
\sup_{x\in K} |C_\varphi^n(f)(x)|= \sup_{x\in K} |f(\varphi^n(x))| \leq \sup_{x\in L}|f(x)|,
\]
for all $f\in H(B_{X})$ and $n\in\N$.
So the sequence $(C_\varphi^n)_n$ is equicontinuous and $C_{\varphi}$ is power bounded.

\ref{PBinH2} $\Rightarrow$ \ref{PBinH3}
Suppose now that  $C_{\varphi}$ is power bounded, then for each compact set $K \subseteq B_{X}$ we can find 
$c>0$ and a compact set $L \subseteq B$ so that
\[
\sup_{x\in K} |C_\varphi^n(f)(x)|\leq c \sup_{x\in L} |f(x)|
\]
for every $f \in H(B_{X})$ and $n \in \mathbb{N}$. This obviously implies
\[
\sup_{x\in K} \big| \tfrac{1}{n} C_\varphi^n(f)(x) \big| \leq c \sup_{x\in L} |f(x)|
\]
for every $f$ and $n$, and $\big(\frac{1}{n} C_{\varphi}^{n} \big)_{n}$ is equicontinuous.

\ref{PBinH3} $\Rightarrow$ \ref{PBinH4} follows just taking $a_{n}= \frac{1}{cn}$ in \eqref{topologizable}. 

\ref{PBinH4} $\Rightarrow$ \ref{PBinH1} Fix some compact set $K \subseteq B_{X}$. Since $C_{\varphi}$ is topologizable, we can find some compact set $W \subseteq B_{X}$, and $(a_n)_{n\in\N}$ with $a_n>0$ such that, 
\begin{equation}\label{no-so}
\sup_{x\in K} |f(\varphi^n(x))| \leq \frac{1}{a_n} \sup_{x\in W}|f(x)|  \,,
\end{equation}
for all $f\in H(B_X)$ and  $n\in \N$. By \cite[Corollary~10.7 and Theorem~11.4]{mujica},
the set $L = \widehat{W}_{H(B_{X})}$ (recall \eqref{Fhull}) is compact and contains $W$. We see that $\varphi^{n} (K) \subseteq L$ for every $n$. Suppose that this is not the case and take $x_0\in K$ and $n_0\in\N$ so that $\varphi^{n_0}(x_0)\notin L$. Then there is  $f\in H(B_X)$ such that $|f(\varphi^{n_0}(x_0))| > \sup_{y\in W}|f(y)|$, and  there exists $m\in\N$ such that 
\[
\sup_{y \in W} \frac{\vert f(y)\vert^{m}}{\vert f(\varphi^{n_{0}} (x_{0}) ) \vert^{m}} < a_{n_{0}} \,,
\]
and this clearly contradicts \eqref{no-so} (taking $g=f^{m}$). 
\end{proof}

\begin{prop}
	\label{pdd-ume}
	Let $\varphi:B_{X} \to B_{X}$ be holomorphic. If $C_\varphi :H(B_{X}) \to H(B_{X})$ is power bounded, then it is also uniformly mean ergodic.
\end{prop}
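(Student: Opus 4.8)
The plan is to establish that power boundedness forces uniform mean ergodicity by exploiting both the equicontinuity of the Cesàro means and a compactness/convergence argument at the level of the symbol. First I would invoke Theorem~\ref{PBinH} to translate power boundedness of $C_\varphi$ into the statement that $\varphi$ has stable orbits: for each compact $K\subseteq B_X$ there is a compact $L\subseteq B_X$ with $\varphi^n(K)\subseteq L$ for all $n$, so that in particular $\overline{\bigcup_n \varphi^n(K)}$ is compact. The guiding idea is that the natural candidate limit operator should act by composition with a limiting behaviour of the orbits, so I want to understand $\varphi^n$ on a fixed orbit. For $x\in B_X$, the orbit $\{\varphi^n(x)\}_n$ is relatively compact by Remark~\ref{point-so}, which is the compactness ingredient I will lean on.

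Next I would set up the Cesàro means $C_{\varphi,[n]}=\frac1n\sum_{k=0}^{n-1}C_\varphi^k$ and show their convergence in the topology of bounded convergence on $L(H(B_X))$. The key structural point is that $H(B_X)$ with $\tau_0$ is complete (it is a closed subspace of $C(B_X)$ with the compact-open topology), so by a Cauchy-type criterion it suffices to show that $(C_{\varphi,[n]}f)_n$ is $\tau_0$-Cauchy uniformly for $f$ ranging over a bounded set, i.e.\ uniformly on sets bounded on compacta. Equicontinuity of $(\frac1n C_\varphi^n)_n$ from assertion~\ref{PBinH3} gives that $\frac1n C_\varphi^n f\to 0$ uniformly on compacta, uniformly over bounded families of $f$; the standard one-line telescoping identity
\[
C_{\varphi,[n]}C_\varphi - C_{\varphi,[n]} = \tfrac1n\big(C_\varphi^{n}-\id\big)
\]
then shows that any limit $L$ of the means satisfies $LC_\varphi=L$ and $C_\varphi L=L$, and that $(\id-C_\varphi)$ maps into the kernel of $L$.

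The crucial convergence step is to produce the limit. Here I would use that the means of a power bounded operator on a space where orbits are relatively compact converge: for each fixed $f$, the sequence $(C_\varphi^k f)_k$ lives in an equicontinuous, pointwise-relatively-compact family (its values at each point of a compact $K$ stay in the compact $f(L)$), so by an Arzelà--Ascoli argument $(C_\varphi^k f)_k$ is relatively compact in $(H(B_X),\tau_0)$; the classical mean ergodic theorem of Yosida then yields $\tau_0$-convergence of $(C_{\varphi,[n]}f)_n$ to some $Lf$, giving mean ergodicity. To upgrade to \emph{uniform} mean ergodicity I would verify that the convergence is uniform on bounded subsets of $H(B_X)$, using that the bound in the equicontinuity estimate of Theorem~\ref{PBinH} is uniform over $f$ in a bounded set together with the relative compactness of $\bigcup_n\varphi^n(K)$, which confines all the relevant function values to a single compact set $f(\overline{\bigcup_n\varphi^n(K)})$ whose diameter controls the Cesàro tails uniformly.

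I expect the main obstacle to be the \emph{uniformity} in $f$ needed for uniform (as opposed to plain) mean ergodicity: pointwise-in-$f$ mean ergodicity follows cleanly from Yosida's theorem, but making the rate of convergence of $(C_{\varphi,[n]}f)_n$ uniform over a $\tau_0$-bounded set of $f$ requires an equicontinuity argument that simultaneously controls the orbit $\{\varphi^n(x):n\in\N,\,x\in K\}$ inside one fixed compact set and exploits the completeness of $H(B_X)$ to pass to the limit. The delicate point is ruling out that different $f$ in the bounded set converge at arbitrarily slow rates; this should be resolved by noting that once the orbit is confined to a compact $L$, the supremum seminorm over $K$ of $C_{\varphi,[n]}f$ is controlled entirely by the seminorm of $f$ over $L$, reducing the whole problem to the mean ergodic behaviour of a single equicontinuous operator family restricted to a space of functions on a fixed compact set.
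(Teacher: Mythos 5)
Your argument for plain mean ergodicity is essentially sound: stable orbits confine the orbit $\{C_\varphi^k f : k\in\N\}$ to a bounded subset of $(H(B_X),\tau_0)$, such a set is relatively compact, and the classical (Yosida) mean ergodic theorem then gives convergence of the Ces\`aro means for each fixed $f$. But the passage to \emph{uniform} mean ergodicity --- which is the actual content of the proposition --- has a genuine gap, one you flag yourself. Your proposed resolution, that ``the supremum seminorm over $K$ of $C_{\varphi,[n]}f$ is controlled entirely by the seminorm of $f$ over $L$,'' is nothing more than a restatement of power boundedness (equicontinuity of the means): it bounds the \emph{size} of $C_{\varphi,[n]}f$ uniformly in $f$, but says nothing about the \emph{rate} at which $C_{\varphi,[n]}f$ approaches its limit, which is what uniform convergence over a $\tau_0$-bounded set of $f$ requires. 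That such a domination cannot suffice is shown by this very paper: the operator $C_F$ on $H_b(B_{c_0})$ of Proposition~\ref{beethoven} is power bounded --- so it satisfies exactly the same seminorm domination, with ``compact'' replaced by ``$B_{c_0}$-bounded'' --- and is mean ergodic, yet it is \emph{not} uniformly mean ergodic. Hence any correct proof must exploit a property of $(H(B_X),\tau_0)$ that $H_b(B_X)$ lacks.

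That property is the semi-Montel property, and it is the one ingredient your outline never invokes: by \cite[Proposition~9.16]{mujica}, bounded subsets of $(H(B_X),\tau_0)$ are relatively compact. (In your proposal, compactness is used only downstairs --- for orbits of $\varphi$ in $B_X$ and for sets of function values $f(L)$ in $\C$ --- never for bounded sets of functions in $H(B_X)$ itself.) With it, the uniformity step is immediate and standard: the means $(C_{\varphi,[n]})_n$ are equicontinuous and converge pointwise (your Yosida step), and an equicontinuous sequence of operators that converges pointwise converges uniformly on precompact sets; since bounded subsets of $H(B_X)$ are precompact, $C_\varphi$ is uniformly mean ergodic. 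This is in fact the paper's entire proof, which bypasses stable orbits and Theorem~\ref{PBinH} altogether: $H(B_X)$ is semi-Montel, hence semi-reflexive, and by the abstract result in \cite[p.~917]{BoPaRi11} (see also \cite[Proposition~3.1]{JoSaSe20}) every power bounded operator on such a space is automatically uniformly mean ergodic.
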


\begin{proof}
	From \cite[Proposition~9.16]{mujica} we know that every bounded subset of $H(B_{X})$ is relatively compact, therefore $H(B_{X})$ is semi-Montel and, in particular, semi-reflexive. Then, as a consequence of \cite[p.~917]{BoPaRi11} (see also \cite[Proposition~3.1]{JoSaSe20}) we have that every power bounded operator is uniformly mean ergodic.
\end{proof}

\section{The space of holomorphic functions of bounded type} \label{Hb}

If $X$ and $Y$ are Banach spaces and $U \subseteq X$ and $V \subseteq Y$ are open sets, a function $f : U \to V$ is of bounded type if it sends $U$-bounded sets to $V$-bounded sets. We consider the space
$H_{b}(B_{X})$ of all holomorphic functions $f : B_{X} \to \mathbb{C}$ of bounded type, endowed with the topology $\tau_{b}$ of uniform convergence on $B_{X}$-bounded sets. This is a Fr\'echet space. 

If $\varphi : B_{X} \to B_{X}$ is holomorphic of bounded type, then clearly $C_{\varphi} : H_{b} (B_{X}) \to H_{b} (B_{X})$ is well defined. On the other hand, we observe that $X'\subseteq H_{b} (B_{X})$ because every functional is trivially Fr\'echet differentiable. In fact, $X'$ is a complemented subspace of $H_{b} (B_{X})$, as we explain below in the proof of Proposition~\ref{PbdNoMeHBB}.  So, if the composition operator is well defined (as a self map on $H_{b} (B_{X})$), then the 
argument in Remark~\ref{hubbard} shows that $\varphi$ has to be holomorphic. Furthermore, \cite[Proposition~3]{GoGu96} shows that $\varphi$ is of bounded type.

Our first goal in this section is to characterise the power boundedness of composition operators on $H_{b}(B_{X})$. As in Remark~\ref{santsaens}, if the composition operator $C_{\varphi}$ is topologizable, then for every $B_{X}$-bounded set $U$ there is some $B_{X}$-bounded set $V$ such that
\[
\sup_{x \in U} \vert f( \varphi^{n} (x) ) \vert \leq \sup_{x \in V} \vert f(x) \vert \,,
\]
and  $C_{\varphi}$ is power bounded. We go further.

\begin{lema}\label{HolomorphicallyConvexBounded}
	If $U$ is an absolutely convex open set on a Banach space $X$, then $\widehat{A}_{H_b(U)}$ is $U$-bounded for every $U$-bounded set $A$.
\end{lema}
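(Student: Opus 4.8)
The plan is to exploit the two features built into the hypothesis: that $U$ is \emph{absolutely convex}, so its Minkowski gauge is a continuous seminorm that encodes both $U$ and its boundary, and that $X'\subseteq H_b(U)$, so continuous linear functionals may be used as test functions in the definition \eqref{Fhull} of the hull. Concretely, I would control $\widehat{A}_{H_b(U)}$ from outside by a single seminorm and by linear functionals, both of which are tailored to the geometry of an absolutely convex set.

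First I would translate $U$-boundedness into a gauge condition. Let $\mu$ be the Minkowski gauge of $U$; since $U$ is absolutely convex and open, $\mu$ is a continuous seminorm with $U=\{x:\mu(x)<1\}$, and (excluding the trivial case $U=X$, where $\mu\equiv 0$ and $U$-bounded means merely bounded) $\partial U=\{x:\mu(x)=1\}$. The key preliminary claim is that a bounded set $A\subseteq U$ has positive distance to $\partial U$ if and only if $\sup_{a\in A}\mu(a)<1$. One direction uses $\mu\le C\|\cdot\|$ together with subadditivity: on $\{\mu\le 1-\varepsilon\}$ one has $\mu(z-x)\ge\varepsilon$ for $\mu(z)=1$, whence distance at least $\varepsilon/C$ to $\partial U$; the other direction takes a sequence with $\mu(a_n)\to 1$ and uses that $a_n/\mu(a_n)\in\partial U$ converges to $a_n$ in norm (here norm-boundedness of $A$ is essential). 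Thus I may fix $\varepsilon>0$ with $\sup_{a\in A}\mu(a)=1-\varepsilon$ and $M:=\sup_{a\in A}\|a\|<\infty$. This equivalence is the only genuinely delicate step, since it requires identifying $\partial U$ with $\{\mu=1\}$ and estimating distances through a possibly degenerate seminorm; everything afterwards is routine.

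With this in hand, both aspects of $U$-boundedness of the hull follow by testing against functionals in $X'\subseteq H_b(U)$. For boundedness: if $x\in\widehat{A}_{H_b(U)}$ then $|x'(x)|\le\sup_{a\in A}|x'(a)|\le\|x'\|\,M$ for every $x'\in X'$, so $\|x\|\le M$. For the distance to $\partial U$: given any $x_0\in U$ with $\mu(x_0)>1-\varepsilon$, a Hahn--Banach functional $x'\in X'$ dominated by $\mu$ with $x'(x_0)=\mu(x_0)$ satisfies $|x'(a)|\le\mu(a)\le 1-\varepsilon<\mu(x_0)=|x'(x_0)|$ for all $a\in A$, so $x_0\notin\widehat{A}_{H_b(U)}$; hence $\widehat{A}_{H_b(U)}\subseteq\{\mu\le 1-\varepsilon\}$, which has positive distance to $\partial U$ by the first step. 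Combining the two inclusions shows $\widehat{A}_{H_b(U)}$ is bounded and stays away from the boundary, i.e.\ it is $U$-bounded. As an alternative to the separating-functional argument one can invoke the bipolar theorem: since $X'\subseteq H_b(U)$, the hull is contained in the closed absolutely convex hull $\overline{\Gamma}(A)$, and absolute homogeneity and subadditivity of $\mu$ give $\overline{\Gamma}(A)\subseteq\{\mu\le 1-\varepsilon\}\cap M\overline{B_X}$, yielding the same conclusion.
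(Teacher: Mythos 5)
Your proof is correct, and it takes a genuinely more self-contained route than the paper's. Both arguments rest on the same key observation --- since $X'\subseteq H_b(U)$, linear functionals alone already constrain the hull --- but they exploit it differently. The paper observes that $\widehat{A}_{H_b(U)}$ is contained in the bipolar of $A$, which by the Bipolar Theorem equals $\overline{\co}(A)$, and then \emph{cites} a remark of Burlandy--Moraes to the effect that $\overline{\co}(A)$ is $U$-bounded because $U$ is absolutely convex. You instead make the geometry explicit through the Minkowski gauge $\mu$ of $U$: you characterise $U$-boundedness of a bounded set as $\sup_A\mu<1$ (correctly flagged as the delicate point; your two-sided argument --- subadditivity together with $\mu\le C\|\cdot\|$ in one direction, the boundary points $a_n/\mu(a_n)$ in the other, which is where norm-boundedness of $A$ enters --- is sound), then use Hahn--Banach functionals dominated by $\mu$ to expel from the hull every point with $\mu(x_0)>1-\varepsilon$, and norm-one functionals to bound the hull by $\sup_{a\in A}\|a\|$. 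Your closing alternative, namely $\widehat{A}_{H_b(U)}\subseteq\overline{\co}(A)\subseteq\{\mu\le 1-\varepsilon\}\cap M\overline{B}_X$, is in effect the paper's proof with the cited remark proved rather than quoted. What the paper's route buys is brevity; what yours buys is a self-contained argument that in passing establishes the auxiliary fact (the closed absolutely convex hull of a $U$-bounded set is $U$-bounded when $U$ is absolutely convex) from first principles.
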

\begin{proof}
The polar set of $A$ is a subset of $X'$ and it is contained in $H_{b}(U)$. Then a straightforward computation shows that $\widehat{A}_{H_b(U)}$ is contained in the bipolar of $A$, which by the  Bipolar Theorem coincides with $\overline{\co}(A)$ (the closure of the absolutely convex hull  of $A$). Since $U$ is absolutely convex, \cite[Remark, p.~527]{burlandymoraes} gives that $\overline{\co}(A)$ is $U$-bounded, which completes the proof.
\end{proof}

With exactly the same proof as in Theorem~\ref{PBinH}, replacing `compact' by `$B_{X}$-bounded' we have the following.

\begin{teo}\label{pbddHb}
Let $\varphi:B_{X} \to B_{X}$ be a holomorphic mapping. The following assertions are equivalent
\begin{enumerate}
	\item\label{PBinHb1} $\varphi$ has $B_{X}$-stable orbits.
	\item \label{PBinHb2} $C_\varphi :H_{b}(B_{X}) \to H_{b}(B_{X})$ is power bounded.
	\item \label{PBinHb3} $\big(\frac{1}{n} C_{\varphi}^{n} \big)_{n}$ is equicontinuous in $L(H_{b}(B_{X}))$.
	\item \label{PBinHb4} $C_{\varphi} :H_{b}(B_{X}) \to H_{b}(B_{X})$ is topologizable.		
\end{enumerate}
\end{teo}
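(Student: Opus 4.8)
The plan is to replicate the cyclic chain of implications from Theorem~\ref{PBinH}, systematically replacing every occurrence of `compact set' by `$B_X$-bounded set' and the topology $\tau_0$ by $\tau_b$. The only structural difference lies in the final implication, where Mujica's compactness statement for the $H(B_X)$-hull must be swapped for Lemma~\ref{HolomorphicallyConvexBounded}.

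For \ref{PBinHb1} $\Rightarrow$ \ref{PBinHb2}, given a $B_X$-bounded set $A$, the hypothesis of $B_X$-stable orbits furnishes a $B_X$-bounded set $L$ with $\varphi^n(A) \subseteq L$ for all $n$; then
\[
\sup_{x\in A}|C_\varphi^n(f)(x)| = \sup_{x\in A}|f(\varphi^n(x))| \leq \sup_{x\in L}|f(x)|
\]
for all $f \in H_b(B_X)$ and $n\in\N$, so $(C_\varphi^n)_n$ is equicontinuous. The implications \ref{PBinHb2} $\Rightarrow$ \ref{PBinHb3} and \ref{PBinHb3} $\Rightarrow$ \ref{PBinHb4} go through verbatim: power boundedness gives $\sup_{x\in A}|C_\varphi^n(f)(x)| \leq c\sup_{x\in L}|f(x)|$, dividing by $n$ yields equicontinuity of $\big(\tfrac1n C_\varphi^n\big)_n$, and the choice $a_n = \tfrac{1}{cn}$ in \eqref{topologizable} gives topologizability.

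The substance is in \ref{PBinHb4} $\Rightarrow$ \ref{PBinHb1}. Fixing a $B_X$-bounded set $A$, topologizability provides a $B_X$-bounded set $W$ and positive scalars $(a_n)$ with
\[
\sup_{x\in A}|f(\varphi^n(x))| \leq \frac{1}{a_n}\sup_{x\in W}|f(x)|
\]
for all $f\in H_b(B_X)$ and $n$. Here I would set $L = \widehat{W}_{H_b(B_X)}$; since $B_X$ is absolutely convex, Lemma~\ref{HolomorphicallyConvexBounded} guarantees that $L$ is $B_X$-bounded, and clearly $W \subseteq L$. To see $\varphi^n(A)\subseteq L$, I would argue by contradiction: if $\varphi^{n_0}(x_0) \notin L$ for some $x_0\in A$ and $n_0$, pick $f\in H_b(B_X)$ with $|f(\varphi^{n_0}(x_0))| > \sup_{y\in W}|f(y)|$ and raise it to a sufficiently high power $m$ so that $g = f^m/f(\varphi^{n_0}(x_0))^m$ (again an element of $H_b(B_X)$, as powers of bounded-type functions are of bounded type) satisfies $1 = |g(\varphi^{n_0}(x_0))| > \tfrac{1}{a_{n_0}}\sup_{y\in W}|g(y)|$, contradicting the displayed estimate.

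The main obstacle, and the only genuine deviation from Theorem~\ref{PBinH}, is ensuring that the $H_b(B_X)$-hull of the $B_X$-bounded set $W$ is again $B_X$-bounded: this is precisely what Lemma~\ref{HolomorphicallyConvexBounded} supplies, via the absolute convexity of the unit ball, playing the role that the compactness of the $H(B_X)$-hull played before. One should also verify the routine point that the auxiliary function $g$ remains in $H_b(B_X)$.
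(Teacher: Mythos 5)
Your proposal is correct and takes essentially the same approach as the paper: the paper's entire proof of Theorem~\ref{pbddHb} is the observation that the cyclic argument of Theorem~\ref{PBinH} goes through verbatim with ``compact'' replaced by ``$B_X$-bounded,'' and Lemma~\ref{HolomorphicallyConvexBounded} is placed immediately before the theorem precisely to play the role you assign it in the implication \ref{PBinHb4}~$\Rightarrow$~\ref{PBinHb1}, namely ensuring that $\widehat{W}_{H_b(B_X)}$ is $B_X$-bounded. Your additional remark that the auxiliary function $g=f^m/f(\varphi^{n_0}(x_0))^m$ stays in $H_b(B_X)$ is a point the paper leaves implicit, and it holds because $H_b(B_X)$ is an algebra.
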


We now show that in this case every mean ergodic composition operator is power bounded, and there are power bounded operators that are not mean ergodic.

\begin{prop}\label{PbdMeHBB}
Let $\varphi:B_{X} \to  B_{X}$ a holomorphic mapping. If $C_\varphi:H_b(B_{X}) \to  H_b(B_{X})$ is mean ergodic, then $C_\varphi$ is power bounded.
\end{prop}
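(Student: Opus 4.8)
The plan is to reduce the statement to the equicontinuity characterisation of power boundedness already available in Theorem~\ref{pbddHb}. Writing $T:=C_\varphi$, I would first extract from mean ergodicity the standard necessary condition that $\tfrac{1}{n}T^n f \to 0$ in $H_b(B_X)$ for every $f$. This follows from the algebraic identity $T^n = (n+1)T_{[n+1]} - n T_{[n]}$, obtained by comparing the partial sums defining consecutive Ces\`aro means; dividing by $n$ gives $\tfrac{1}{n}T^n f = \tfrac{n+1}{n}T_{[n+1]}f - T_{[n]}f$, and since by hypothesis both $T_{[n+1]}f$ and $T_{[n]}f$ converge to the same limit $Lf$ while $\tfrac{n+1}{n}\to 1$, continuity of the vector-space operations forces $\tfrac{1}{n}T^n f \to 0$.

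In particular, for every $f\in H_b(B_X)$ the sequence $(\tfrac{1}{n}T^n f)_{n\ge 1}$ is convergent, hence bounded in $H_b(B_X)$. This says exactly that the family of operators $(\tfrac{1}{n}C_\varphi^n)_{n\ge 1}\subseteq L(H_b(B_X))$ is pointwise bounded. Since $H_b(B_X)$ is a Fr\'echet space, it is barrelled, so the Banach--Steinhaus theorem applies and the family $(\tfrac{1}{n}C_\varphi^n)_{n\ge 1}$ is equicontinuous. This is precisely assertion \ref{PBinHb3} of Theorem~\ref{pbddHb}.

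Finally, invoking the equivalences of Theorem~\ref{pbddHb}, equicontinuity of $(\tfrac{1}{n}C_\varphi^n)_n$ yields that $\varphi$ has $B_X$-stable orbits, and hence that $C_\varphi$ is power bounded, which is the desired conclusion.

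I expect the only delicate point to be making sure the two ingredients are genuinely available rather than any single hard step: the passage from convergence of the Ces\`aro means to $\tfrac{1}{n}T^n f\to 0$ is a formal manipulation valid in any topological vector space, and the step from pointwise boundedness to equicontinuity rests entirely on $H_b(B_X)$ being barrelled, which is guaranteed by its Fr\'echet structure. No compactness or reflexivity of $H_b(B_X)$ is needed here, in contrast to the semi-Montel situation exploited in Section~\ref{H}; this is exactly what makes the implication go through even though $H_b(B_X)$ fails to be semi-Montel.
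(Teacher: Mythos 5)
Your proof is correct and follows essentially the same route as the paper: extract $\tfrac{1}{n}C_\varphi^n f \to 0$ from mean ergodicity, use barrelledness of the Fr\'echet space $H_b(B_X)$ to upgrade pointwise boundedness to equicontinuity, and conclude via the equivalence \ref{PBinHb3} $\Leftrightarrow$ \ref{PBinHb2} of Theorem~\ref{pbddHb}. The only difference is that you spell out the algebraic identity $T^n = (n+1)T_{[n+1]} - nT_{[n]}$ behind the first step, which the paper leaves implicit.
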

\begin{proof}
The mean ergodicity immediately gives that  the sequence $(\frac{1}{n}C_\varphi^n)$ tends to zero (pointwise), so it is pointwise bounded. Since $H_b(B_{X})$ is barrelled (because it is a Fréchet space), it is also equicontinuous on $H_b(B_{X})$. This, in view of Theorem~\ref{pbddHb}, gives the conclusion.
\end{proof}

We want to find now composition operators that are power bounded but not mean ergodic. The shifts defined in  \eqref{def:shifts} provide us with such examples.

\begin{prop}\label{PbdNoMeHBB}
The composition operators $C_{B} :H_b(B_{c_0}) \to  H_b(B_{c_0})$ and $C_F:H_b(B_{\ell_{1}}) \to  H_b(B_{\ell_{1}})$ are power bounded but not mean ergodic.
\end{prop}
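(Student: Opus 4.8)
The plan is to handle both shifts simultaneously, since the backward shift $B$ on $c_0$ and the forward shift $F$ on $\ell_1$ are in some sense dual situations. First I would establish power boundedness: both $B$ and $F$ are linear with operator norm $\le 1$, hence by the consequence of Proposition~\ref{fix0} (``every continuous homogeneous polynomial with $\Vert P\Vert\le 1$ has $B_X$-stable orbits'') they have $B_X$-stable orbits, and so by the equivalence \ref{PBinHb1}$\Leftrightarrow$\ref{PBinHb2} in Theorem~\ref{pbddHb} the operators $C_B$ and $C_F$ are power bounded on the respective spaces $H_b(B_{c_0})$ and $H_b(B_{\ell_1})$. This part is immediate.

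The substance of the proof is showing failure of mean ergodicity, and here I would exploit the complemented copy of $X'$ inside $H_b(B_X)$ mentioned just before the statement. The idea: $C_\varphi$ restricted to $X'$ acts as $x'\mapsto x'\circ\varphi$, which for a linear symbol is simply the transpose (adjoint) $\varphi'$ acting on $X'$. Since $X'$ is complemented in $H_b(B_X)$ by a projection commuting appropriately with $C_\varphi$, the Ces\`aro means $(C_\varphi)_{[n]}$ converge on all of $H_b(B_X)$ only if they converge on the subspace $X'$; equivalently, mean ergodicity of $C_\varphi$ forces mean ergodicity of the adjoint $\varphi'$ as an operator on $X'$ (with its norm/weak topology inherited as a complemented subspace). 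So the plan reduces the problem to a purely linear, Banach-space question: is the transpose of the shift mean ergodic on the relevant dual space?

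For the concrete computation, note that for the backward shift $B$ on $c_0$, the transpose acts on $X'=\ell_1$ as the forward shift, while for the forward shift $F$ on $\ell_1$, the transpose acts on $X'=\ell_\infty$ as the backward shift. In each case I would test mean ergodicity against a single basis functional: take $x'=e_1^*\in\ell_1$ (resp.\ a suitable element of $\ell_\infty$) and compute the Ces\`aro averages $\frac{1}{n}\sum_{k=0}^{n-1}(\varphi')^k e_1^*$. For the shift, the iterates $(\varphi')^k e_1^*$ are the disjointly supported unit vectors $e_{k+1}^*$, so the average is $\frac{1}{n}\sum_{k=1}^{n}e_k^*$, which in $\ell_1$ has norm $1$ for every $n$ and does \emph{not} converge (its natural limit $0$ is not approached in norm, and the averages are not even Cauchy in the topology of $H_b$). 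This exhibits an element of $H_b(B_X)$ on which the Ces\`aro means fail to converge, contradicting mean ergodicity.

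\textbf{Main obstacle.} The step I expect to require the most care is making rigorous the claim that mean ergodicity of $C_\varphi$ on $H_b(B_X)$ \emph{descends} to the complemented subspace $X'$ with the \emph{correct} topology. Mean ergodicity is a statement about convergence in $\tau_b$, so I must verify that the projection $H_b(B_X)\to X'$ is $\tau_b$-continuous and that the induced topology on $X'$ is strong enough that the divergence computed above genuinely obstructs $\tau_b$-convergence of $(C_\varphi)_{[n]}f$ for $f\in X'\subseteq H_b(B_X)$. Concretely, I would argue that for a linear functional $x'$ viewed in $H_b(B_X)$, uniform convergence on the $B_X$-bounded set $rB_X$ already controls the norm of $x'$ up to the factor $r$; hence $\tau_b$-convergence of the Ces\`aro means forces norm-convergence in $X'$, which is exactly what fails. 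Verifying this comparison of topologies carefully is the crux; the rest is bookkeeping.
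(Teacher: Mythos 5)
Your strategy coincides with the paper's: power boundedness via $B_X$-stable orbits (Example~\ref{shifts}) together with Theorem~\ref{pbddHb}, and failure of mean ergodicity by restricting $C_\varphi$ to the complemented copy of $X'$ inside $H_b(B_X)$, on which a linear symbol acts as its transpose. Your resolution of the ``descent'' issue is also sound and is the same in substance as the paper's: since $\sup_{\|x\|\le r}|u(x)|=r\|u\|$ for $u\in X'$, a $\tau_b$-convergent sequence of restrictions of linear functionals is uniformly Cauchy on $rB_X$, hence norm-Cauchy in $X'$; the paper packages this via the maps $P(f)=df(0)$ and $J(u)=u|_{B_X}$ and the identity $F=P\circ C_B\circ J$. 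For $C_B$ on $H_b(B_{c_0})$ your argument is complete: the transpose of $B$ is the forward shift on $\ell_1$, whose Ces\`aro averages of $e_1$ have norm one while converging coordinatewise to $0$, so they cannot converge in norm.

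There is, however, a genuine gap in your treatment of $C_F$ on $H_b(B_{\ell_1})$. There the transpose is the \emph{backward} shift on $\ell_\infty$, and the computation you offer --- ``the iterates $(\varphi')^k e_1^*$ are the disjointly supported unit vectors $e_{k+1}^*$'' --- is false in that case: the backward shift satisfies $Be_1=0$ and $Be_{k+1}=e_k$, so for \emph{every} basis vector the Ces\`aro averages converge to $0$ in norm. Indeed, no basis-vector (nor even $c_0$- or $c$-valued) test can work, since the backward shift is mean ergodic on the subspace $c$ of convergent sequences: $\frac{1}{n}\sum_{k=0}^{n-1}x_{j+k}\to\lim_i x_i$ uniformly in $j$. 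You must instead exhibit an honest $\ell_\infty$ witness, for instance a $\{0,1\}$-valued sequence $x$ built from alternating blocks of $0$'s and $1$'s whose lengths grow geometrically; then the first coordinate of $B_{[n]}x$, namely $\frac{1}{n}\sum_{k=1}^{n}x_k$, oscillates between values near $\tfrac13$ and $\tfrac23$, so $(B_{[n]}x)_n$ does not converge even coordinatewise, and $B$ is not mean ergodic on $\ell_\infty$. (The paper asserts this non-ergodicity without detail, but it makes no incorrect claim; your written computation, taken literally, fails for this case.) Once this witness is supplied, your proof is complete and is essentially the paper's.
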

\begin{proof}
We already noted in Example~\ref{shifts} that $B$ has $B_{c_{0}}$-stable orbits which, in view of Theorem~\ref{pbddHb}, shows that $C_{B}$ is power bounded. 

We now see that  $C_{B}$ is not mean ergodic. We begin by observing that  $H_{b}(B_{X})$ contains a complemented copy of $X'$ for every Banach space $X$. Indeed, given a holomorphic $f: B_{X} \to \mathbb{C}$, we denote its differential at $0$ (that 
belongs to $X'$) by $df(0)$. Then a simple computation shows that the mappings $P: H_{b}(B_{X}) \to X'$ and $J: X' \to H_{b}(B_{X})$ defined by $P(f) = df(0)$ and $J(u) = \left.u\right|_{B_{X}}$  give our 
claim.

We consider now the restriction of $C_{B}$ to $J(\ell_{1})$ (recall that $c_{0}'= \ell_{1}$) and we have, for each $u \in \ell_{1}$ and $x \in c_{0}$,
\begin{multline*}
	\langle C_B u, x \rangle = u(B (x))= \langle u, B(x) \rangle 
	=\langle (u_1,u_2,u_3,\dots),(x_2,x_3,x_4,\dots) \rangle \\ 
	= u_1x_2+u_2x_3+u_3x_4+\cdots 
	=\langle (0,u_1,u_2,u_3,\dots),(x_1,x_2,x_3,x_4,\dots) \rangle = \langle Fu,x \rangle \,.
\end{multline*}
Thus $F=P \circ C_{B}  \circ J$, which is not mean ergodic on $\ell_{1}$ (see, for instance, \cite{BoPaRi11}). This implies that $C_{B}$ is not mean ergodic on $H_{b}(B_{c_{0}})$. 

For the forward shift, Example~\ref{shifts} showed that $F$ has $B_{\ell_{1}}$-stable orbits. Essentially the same argument as before shows that the restriction of $C_{F}$ to $\ell_{1}'= \ell_{\infty}$ is the backward shift $B$, which is not mean ergodic. This yields the conclusion.
\end{proof}

We look now for sufficient conditions for a given power bounded composition operator to be mean ergodic (and up to some point, even to reverse the implication in Proposition~\ref{PbdMeHBB}). Before we need the following lemma. The argument of the proof is essentially the one in \cite[Chapter~2, Theorem~1.1]{krengel85} (see also \cite[page~908]{BoPaRi11}); since our setting is slightly different we sketch the proof here for the sake of completeness. 

\begin{lema}\label{5}
Let $E$ be a locally convex Hausdorff space, $T \in L(E)$ be power bounded and $x \in E$. If $y \in E$ is a  $\sigma(E,E')$-cluster point of  $(T_{[n]}x)_{n\in\N}$ , then $\lim_{n\to\infty} T_{[n]} x=y$.
\end{lema}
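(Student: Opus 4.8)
The plan is to follow the classical mean ergodic theorem argument adapted to the locally convex setting. Set $y_{n} := T_{[n]}x$, and recall that since $T$ is power bounded, the family $\{T^{k} : k \geq 0\}$ is equicontinuous; consequently the Ces\`aro means $(y_{n})_{n}$ and all their tails are contained in a fixed equicontinuous-image bounded set, and the averages satisfy the standard telescoping identity
\[
T y_{n} - y_{n} = \frac{1}{n}\bigl(T^{n}x - x\bigr).
\]
Since $\tfrac{1}{n}T^{n}x \to 0$ (because $\{T^{k}x\}_{k}$ is bounded and we divide by $n$), the right-hand side tends to $0$ in $E$, hence $T y_{n} - y_{n} \to 0$.

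Next I would exploit the hypothesis that $y$ is a $\sigma(E,E')$-cluster point of $(y_{n})_{n}$. First I claim $Ty = y$: for any $x' \in E'$, the functional $x' \circ T \in E'$ as well, and evaluating $x'\circ T$ and $x'$ against the cluster point along a suitable subnet, together with $Ty_{n}-y_{n}\to 0$, forces $\langle x', Ty - y\rangle = 0$ for all $x'$, so $Ty = y$ by Hausdorffness. Because $Ty = y$ we get $T^{k}y = y$ and hence $T_{[n]}y = y$ for every $n$. This fixed-point property is what lets us compare $y_{n}$ with $y$.

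The decisive step is to upgrade weak clustering to convergence in the original topology, and this is where the power boundedness and equicontinuity are essential. The key observation is the averaging-of-averages estimate: writing $y_{n} - y = T_{[n]}(x) - y = T_{[n]}(x - y)$ (using $T_{[n]}y = y$), one shows that $T_{[n]}(x-y)\to 0$. I would argue that for a fixed $q \in \Gamma_{E}$, equicontinuity provides a $p \in \Gamma_{E}$ controlling $q(T^{k}z)$ uniformly in $k$; then, using that $y$ is a weak cluster point of the $y_{m}$, one can for each $n$ find an index $m$ so large that $T_{[n]}y_{m}$ is $q$-close to $T_{[n]}y = y$, while simultaneously $T_{[n]}y_{m}$ is $q$-close to $y_{m+\text{(shift)}}$-type averages of $y_{n}$ itself up to an error that is $O(1/n)$ by the telescoping identity iterated. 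Combining these approximations shows $q(y_{n}-y)$ can be made arbitrarily small, giving $y_{n}\to y$.

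The main obstacle I expect is precisely this last step: in a general locally convex space one cannot pass from a weak cluster point to a strong limit by compactness or reflexivity arguments, so the convergence $T_{[n]}(x-y)\to 0$ must be extracted by hand from the equicontinuity of $\{T^{k}\}$ and the ``double averaging'' trick (estimating $T_{[n]}T_{[m]}$ against both $T_{[n]}$ and $T_{[m]}$). Making the interchange of the two averaging indices rigorous in the seminorm estimate, while keeping the $\sigma(E,E')$-cluster hypothesis as the only link between $y$ and the sequence, is the delicate point; this is exactly why the argument is modelled on \cite[Chapter~2, Theorem~1.1]{krengel85} rather than on a Hilbert-space projection argument.
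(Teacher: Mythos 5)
Your preliminary steps are correct and coincide with the paper's: the telescoping identity $TT_{[n]}x-T_{[n]}x=\frac{1}{n}(T^{n}x-x)\to 0$, the proof that $Ty=y$ by testing with $x'$ and $x'\circ T$ along a subnet (valid, since $E'$ separates points of the Hausdorff space $E$), and the reduction, via $T_{[n]}y=y$, to showing $T_{[n]}(x-y)\to 0$. The gap lies precisely in the step you yourself call delicate, and the sketch you give for it cannot be repaired as stated. You propose to choose $m$ so large that ``$T_{[n]}y_{m}$ is $q$-close to $T_{[n]}y$'', justified only by $y$ being a $\sigma(E,E')$-cluster point of $(y_{m})_{m}$. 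But equicontinuity transfers \emph{seminorm} closeness --- $q(T_{[n]}u-T_{[n]}v)\leq p(u-v)$ --- whereas the cluster hypothesis gives only \emph{weak} closeness, i.e.\ smallness of $|\langle y_{m}-y,x'\rangle|$ for finitely many $x'\in E'$ at a time; no choice of $m$ makes $p(y_{m}-y)$ small, and for fixed $n$ the weak continuity of $T_{[n]}$ again yields only weak, not seminorm, closeness of $T_{[n]}y_{m}$ to $T_{[n]}y$. The double-averaging estimates do not rescue this: the natural bound is $p\big(T_{[n]}(x-T_{[m]}x)\big)\leq\frac{m-1}{n}\,q(x)$, which is useful only when $m\ll n$, while the cluster property only provides weakly-good indices $m$ that may be arbitrarily large. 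So the argument does not close.

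The missing idea --- the one ingredient of the paper's proof that your proposal never invokes --- is the Hahn--Banach (Mazur) theorem: for a \emph{convex} set the $\sigma(E,E')$-closure coincides with the closure in the original topology. Since every $T_{[n]}x$ lies in $\co\{T^{k}x:k\geq 0\}$, the weak cluster point $y$ lies in the weak closure of this convex set, hence in its strong closure. This is exactly what converts the weak hypothesis into the seminorm approximation your sketch needs: given the target seminorm $p$ and the seminorm $q$ furnished by power boundedness, there is a convex combination $z=\sum_{k=0}^{m}\lambda_{k}T^{k}x$ with $q(y-z)<\varepsilon$. Then $p(y-T_{[n]}x)\leq p(T_{[n]}y-T_{[n]}z)+p(T_{[n]}z-T_{[n]}x)$, where the first term is at most $q(y-z)<\varepsilon$ (using $T_{[n]}y=y$ and equicontinuity) and the second is at most $\sum_{k=0}^{m}\lambda_{k}\frac{2k}{n}q(x)\to 0$ as $n\to\infty$ by your telescoping estimate. (Equivalently one may apply Mazur to the subspace $(\id-T)E$, noting $x-T_{[m]}x\in(\id-T)E$; this is Yosida's classical route, and the proof in Krengel that you cite also rests on this convexity step.) Some weak-to-strong passage through convexity is indispensable here, and without it the proposal has a genuine hole.
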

\begin{proof}
Fix $p \in \Gamma_{E}$. Since $T$ is power bounded we can find $q \in \Gamma_{E}$ so that 
$p(T^{n}z) \leq q(z)$
for every $z \in E$. If $y$ is a  $\sigma(E,E')$-cluster point of  $(T_{[n]}x)_{n\in\N}$ then it belongs to the $\sigma(E,E')$-closure of the set which (note that $(T_{[n]}x)_{n\in\N} \subseteq \co (T^{n}x)_{n\in\N_{0}}$) is contained in the $\sigma(E,E')$-closure of $\co (T^{n}x)_{n\in\N_0}$. But as a consequence of the Hahn-Banach theorem, for  convex sets the $\sigma(E,E')$-closure coincides with the closure. So, $y \in \overline{\co}(T^{n}x)_{n\in\N_{0}}$, and for a given $\varepsilon >0$ we can find $z \in \co (T^{n}x)_{n\in\N_{0}}$ so that $q(z-y) <  \varepsilon$, so that we can write
\begin{equation} \label{corona}
p (y - T_{[n]} x ) \leq p(y-T_{[n]} z )  + p (T_{[n]} z - T_{[n]} x ) \,.
\end{equation} 
Note that $z = \sum_{k=0}^{m} \lambda_{k} T^{k} x$ for some $0 \leq \lambda_{k} \leq 1$ with $\sum_{k=0}^{m} \lambda_{k}=1$. We define $S= \sum_{k=0}^{m} \lambda_{k} T^{k}  \in L(E)$. Hence
\[
p( T_{[n]} Sx  - T_{[n]}x ) \leq \sum_{k=0}^{m} \lambda_{k} p( T_{[n]}T^{k} x - T_{[n]}x ) \,.
\]
If $n \geq m \geq k$, we have 
\begin{equation} \label{oud}
p( T_{[n]}T^{k} x - T_{[n]}x ) \leq \frac{1}{n} \Big(  \sum_{j=0}^{k-1} p( T^{j} x ) + \sum_{j=n}^{n+k-1} p( T^{j} x ) \Big) \leq \frac{2k}{n} q(x).
\end{equation}
With this we can estimate the second addend  in the right-hand term of \eqref{corona}. In order to control the first one it is enough to see that $y=Ty$ since, if this is the case then $y=T_{[n]}y$ and $p(y-T_{[n]} z ) \leq q(y-z)$. Given $x' \in E'$, we have
\begin{multline*}
|\langle y-Ty,x'\rangle|
\leq |\langle y- T_{[m]} x,x'\rangle|+ |\langle T_{[m]} x- T T_{[m]} x,x' \rangle| + |\langle T y - T T_{[m]} x,x' \rangle| \\
\leq |\langle y- T_{[m]} x,x'\rangle|+ |\langle T_{[m]} x- T T_{[m]} x,x' \rangle| + |\langle  y -  T_{[m]} x,T' x' \rangle| \,.
\end{multline*}
Since $y$ is a $\sigma(E,E')$-cluster point, we can choose $m$ so that the first and third term are arbitrarily small.
On the other hand, \eqref{oud} implies that $T_{[m]} x- T T_{[m]} x$ tends to $0$ as $m \to \infty$ and, then, so also does the second term. This shows that $y=Ty$ and completes the proof.
\end{proof}

We denote by $\mathcal{P}(X)$ the algebra of all continuous polynomials on $X$ (these are finite sums of homogeneous polynomials), and by $\sigma(X, \mathcal{P}(X))$ the coarsest topology making all $P \in \mathcal{P}(X)$ continuous. This is a Hausdorff topology satisfying $\Vert \cdot \Vert \succeq \sigma(X, \mathcal{P}(X)) \succeq \sigma(X, X^{*})$, and  the concepts of (relatively) countably compact subset, (relatively) sequentially compact subset and (relatively) compact subset all agree with respect to
this topology \cite{galindo2000weakly}.

\begin{prop} \label{662}
Let $\varphi : B_{X} \to B_{X}$ be holomorphic, having $B_{X}$-stable orbits and such that $\varphi (A)$ is relatively $\sigma(X, \mathcal{P}(X))$-compact for every $B_{X}$-bounded set $A$. Then $C_{\varphi} : H_{b} (B_{X}) \to H_{b} (B_{X})$ is mean ergodic.
\end{prop}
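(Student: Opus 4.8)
The plan is to use the mean ergodic criterion encapsulated in Lemma~\ref{5}. Since $\varphi$ has $B_X$-stable orbits, Theorem~\ref{pbddHb} already tells us that $C_\varphi$ is power bounded on $H_b(B_X)$, so the hypothesis of Lemma~\ref{5} is satisfied. Therefore, to conclude that $C_\varphi$ is mean ergodic, it suffices to show that for every $f \in H_b(B_X)$ the sequence of Ces\`aro means $\big( (C_\varphi)_{[n]} f \big)_n$ has a $\sigma(H_b(B_X), H_b(B_X)')$-cluster point in $H_b(B_X)$; Lemma~\ref{5} then upgrades this cluster point to an actual limit, yielding convergence of the Ces\`aro means for every $f$, which is exactly mean ergodicity.

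First I would fix $f \in H_b(B_X)$ and a $B_X$-bounded set $A$, and try to locate the orbit $\{ C_\varphi^n f : n \in \N \}$ inside a relatively compact subset of $H_b(B_X)$. Power boundedness already gives that this orbit is bounded in $H_b(B_X)$, and hence so is the set of Ces\`aro means $\co\{ C_\varphi^n f : n \in \N \}$. The point of the extra hypothesis on $\varphi$ is to convert boundedness into \emph{relative compactness} in the weak topology. Concretely: the functions $C_\varphi^n f = f \circ \varphi^n$ restricted to a $B_X$-bounded set $A$ factor through $\varphi^n(A)$, and since $\varphi$ has $B_X$-stable orbits there is a single $B_X$-bounded set $L$ containing all $\varphi^n(A)$; the assumption that $\varphi(A)$ is relatively $\sigma(X,\mathcal P(X))$-compact (applied to $L$, which is $B_X$-bounded) pins the relevant images into a relatively $\sigma(X,\mathcal P(X))$-compact set. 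Because the topologies of countable, sequential and ordinary compactness agree for $\sigma(X,\mathcal P(X))$ and because polynomials are dense-enough test functions, one extracts a subsequence of $\big( (C_\varphi)_{[n]} f \big)_n$ that converges pointwise, and the uniform boundedness on $B_X$-bounded sets promotes this to the existence of a $\sigma(H_b(B_X),H_b(B_X)')$-cluster point $g \in H_b(B_X)$.

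The main obstacle will be the second step: verifying that the candidate cluster point $g$ actually lies in $H_b(B_X)$ and that the compactness is genuinely with respect to the \emph{weak} topology of $H_b(B_X)$, not merely pointwise. This is where the compactness hypothesis on $\varphi$ must do real work. The strategy is to use that $\sigma(X,\mathcal P(X))$-convergence of a net $(x_\alpha)$ to $x$ in $X$ forces $P(x_\alpha) \to P(x)$ for every polynomial $P$, and then to leverage the Taylor/monomial expansion of $f$ together with its bounded type to control convergence uniformly on $B_X$-bounded sets. In effect one shows that the evaluation maps $h \mapsto h(x)$ for $x$ ranging over a $B_X$-bounded set, when applied to the orbit, behave continuously along the relatively $\sigma(X,\mathcal P(X))$-compact images $\varphi^n(A)$, so that a cluster point in the pointwise sense is automatically a cluster point in $\sigma(H_b(B_X),H_b(B_X)')$. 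I would extract the key equicontinuity estimate from power boundedness exactly as in the proof of Theorem~\ref{pbddHb}, and then invoke Lemma~\ref{5} to finish.
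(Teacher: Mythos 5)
Your overall skeleton coincides with the paper's: power boundedness from Theorem~\ref{pbddHb}, then a $\sigma(H_b(B_{X}),H_b(B_{X})')$-cluster point of the Ces\`aro means, then Lemma~\ref{5}. The genuine gap is in the middle step, which is the heart of the proof. The paper produces the cluster point by invoking a nontrivial external result, \cite[Theorem~2.9]{galindolourencomoraes}: the hypothesis that $\varphi(A)$ is relatively $\sigma(X,\mathcal{P}(X))$-compact for every $B_{X}$-bounded $A$ implies that $C_{\varphi}$ is a \emph{weakly compact operator} on $H_b(B_{X})$, i.e.\ it maps bounded sets into relatively $\sigma(H_b(B_{X}),H_b(B_{X})')$-compact sets. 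Applying this to the bounded set $\{(C_\varphi)_{[n]}f\colon n\in\N\}$ shows that $\big\{\frac{1}{n}\sum_{k=1}^{n}C_\varphi^{k}f\colon n\in\N\big\}$ is relatively weakly compact, and the decomposition $(C_\varphi)_{[n]}f=\frac{1}{n}\big(f-C_\varphi^{n}f\big)+\frac{1}{n}\sum_{k=1}^{n}C_\varphi^{k}f$, whose first term tends to $0$ by power boundedness, transfers the cluster point to the Ces\`aro means themselves. You neither cite this theorem nor prove a substitute for it.

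What you propose instead---extract a pointwise convergent subsequence and claim that ``uniform boundedness on $B_{X}$-bounded sets promotes this to the existence of a $\sigma(H_b(B_{X}),H_b(B_{X})')$-cluster point''---rests on a principle that is false in $H_b(B_{X})$: the weak topology is far stronger than pointwise convergence. A concrete counterexample, taken from the paper itself, is the backward shift $B$ on $c_0$: $C_{B}$ is power bounded (it has $B_{c_0}$-stable orbits), and the orbit of $f(x)=x_{1}$ under $C_{B}$ is the sequence of coordinate functionals $x\mapsto x_{n+1}$, which is bounded in $H_b(B_{c_0})$ and pointwise null, yet has \emph{no} weak cluster point: $\ell_{1}=c_0'$ is complemented in $H_b(B_{c_0})$ via $f\mapsto df(0)$, so a weak cluster point would yield a weak cluster point of $(e_{n}')_{n}$ in $\ell_{1}$, which does not exist (testing against $(1,1,1,\dots)\in\ell_{\infty}$ forces the cluster point to pair to $1$ with it, while testing against elements of $c_{0}$ forces it to be $0$). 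Consistently, Proposition~\ref{PbdNoMeHBB} shows $C_{B}$ is not mean ergodic. Thus boundedness plus pointwise clustering can never suffice; the polynomial-compactness hypothesis on $\varphi$ must enter at precisely this promotion step, and your assertion that evaluations ``behave continuously along the images $\varphi^{n}(A)$'' so that pointwise cluster points are automatically weak cluster points is essentially a restatement of the Galindo--Louren\c{c}o--Moraes theorem, not a proof of it. As written, the argument is incomplete at its central point.
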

\begin{proof}
From Theorem~\ref{pbddHb} we have that the composition operator $C_{\varphi}$ is power bounded, and the equicontinuity of $(C_\varphi^n)_{n\in\N}$ gives that the set $\{(C_\varphi)_{[n]}(f):n\in\N\}$ is bounded for every $f\in H_b(B_{X})$. Now, by \cite[Theorem~2.9]{galindolourencomoraes} $C_\varphi$ maps bounded sets of $H_{b}(B_{X})$ into relatively $\sigma(H_b(B_{X}),H_b(B_{X})')$-compact sets, so for every $f\in H_b(B_{X})$ the set 
\[
C_\varphi \big( \{(C_\varphi)_{[n]}(f) \}_{n} \big) = \Big\{ \frac{1}{n} \sum_{k=1}^{n} C_\varphi^k (f): n\in\N \Big\}
\] is relatively $\sigma(H_b(B_{X}),H_b(B_{X})')$-compact and, therefore it has a  $\sigma(H_b(B_{X}),H_b(B_{X})')$-cluster point. Our aim now is to see that  $\{(C_\varphi)_{[n]}(f) \}_{n}$ has a $\sigma(H_b(B_{X}),H_b(B_{X})')$-cluster point which, using Lemma~\ref{5}, implies that the sequence $((C_\varphi)_{[n]}(f))_{n\in\N}$ converges in $H_b(B_{X})$ for all $f\in H_b(B_{X})$, and $C_\varphi$ is mean ergodic.

Note that 
\[ 
(C_\varphi)_{[n]}(f) = \frac{1}{n}(f - C_\varphi^n (f)) + \frac{1}{n} \sum_{k=1}^{n} C_\varphi^k (f)
\]
for every $n$. The fact that $C_{\varphi}$ is power bounded implies that $\big(\frac{1}{n}(\id (f) -C_\varphi^n (f))\big)_{n\in\N}$ tends to $0$ as $n\to \infty$, and this gives that $\{(C_\varphi)_{[n]}(f) \}_{n}$ has a $\sigma(H_b(B_{X}),H_b(B_{X})')$-cluster point, as we wanted.
\end{proof}

\begin{coro}\label{monteverdi}
Let $X$ be a Banach space such that every $B_{X}$-bounded set is relatively $\sigma(X, \mathcal{P}(X))$-compact. Then $C_\varphi : H_{b}(B_{X}) \to H_{b}(B_{X})$ is power bounded if and only if $C_\varphi$ is mean ergodic.		
\end{coro}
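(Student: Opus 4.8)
The plan is to obtain this as a purely formal consequence of the two results that immediately precede it, treating each implication separately; no genuinely new argument is needed. Being a biconditional, one direction is \emph{mean ergodic} $\Rightarrow$ \emph{power bounded}, and the other is \emph{power bounded} $\Rightarrow$ \emph{mean ergodic}, and the extra hypothesis on $X$ is only used for the second.

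For the direction that mean ergodicity forces power boundedness, I would simply invoke Proposition~\ref{PbdMeHBB}, which already establishes exactly this for an arbitrary holomorphic symbol $\varphi:B_X\to B_X$ and makes no use of the standing assumption on $X$. Hence this implication requires no further work.

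For the converse, suppose $C_\varphi$ is power bounded. By the equivalence \ref{PBinHb1} $\Leftrightarrow$ \ref{PBinHb2} in Theorem~\ref{pbddHb}, the symbol $\varphi$ then has $B_X$-stable orbits. I would next check that the second hypothesis of the proposition immediately preceding this corollary is automatically satisfied: given any $B_X$-bounded set $A$, the inclusion $\varphi(A)\subseteq\bigcup_{n=0}^\infty\varphi^n(A)$ together with the $B_X$-stable orbits property shows that $\varphi(A)$ is itself $B_X$-bounded, and the standing hypothesis on $X$ then guarantees that $\varphi(A)$ is relatively $\sigma(X,\mathcal{P}(X))$-compact. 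Thus both hypotheses of that preceding proposition — having $B_X$-stable orbits and sending $B_X$-bounded sets to relatively $\sigma(X,\mathcal{P}(X))$-compact sets — hold, and it yields that $C_\varphi$ is mean ergodic.

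I do not expect any real obstacle here: the only point needing even a moment's attention is verifying that $\varphi(A)$ falls under the hypothesis imposed on $X$, which is immediate once one observes that $\varphi(A)$ is $B_X$-bounded. The corollary is therefore nothing more than the formal conjunction of Proposition~\ref{PbdMeHBB} with the preceding mean-ergodicity proposition, mediated by Theorem~\ref{pbddHb}.
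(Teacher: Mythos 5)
Your proposal is correct and is precisely the derivation the paper intends (the corollary is stated without an explicit proof, as an immediate consequence of Proposition~\ref{PbdMeHBB}, Theorem~\ref{pbddHb}, and the preceding mean-ergodicity proposition). Your one substantive verification --- that $\varphi(A)$ is $B_X$-bounded because $\varphi(A)\subseteq\bigcup_{n=0}^{\infty}\varphi^{n}(A)$ and $\varphi$ has $B_X$-stable orbits, so the hypothesis on $X$ applies to it --- is exactly the right glue (one could alternatively note that $\varphi$ is of bounded type whenever $C_\varphi$ is well defined on $H_b(B_X)$, as the paper records via \cite{GoGu96}, but your route is equally valid and more self-contained).
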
 
An example of a Banach space which satisfies such a property is the Tsirelson space $T^*$: it is known that $T^*$ is reflexive and the polynomials on $T^*$ are weakly sequentially continuous \cite[p.~121]{Di99}. Hence, any sequence in the unit ball of $T^*$ has a weakly convergent subsequence, which converges in the topology $\sigma(T^*,\mathcal{P}(T^*)).$ Since $\sigma(T^*,\mathcal{P}(T^*))$ is angelic \cite[p.~150]{galindo2000weakly}, the unit ball is also relatively $\sigma(T^*,\mathcal{P}(T^*))$-compact.

We find now conditions to ensure that a given composition operator is uniformly mean ergodic. Here $C_{0}$ denotes the composition operator defined by the constant  function $0$ (i.e. $C_{0}(f) = f(0)$ for every $f$).

\begin{teo} \label{janacek}
Let $\varphi:B_{X} \to  B_{X}$ be holomorphic so that for every $0<t<1$ there exists $0<\rho<t$ such that
\begin{equation} \label{boulanger}
\varphi(tB_{X})\subseteq \rho B_{X}\,. 
\end{equation}
Then 
\[ 
 C_{{\varphi}^n} \to C_0, 
 \]
in the topology of bounded convergence on $H_b(B_{X})$. In particular,
\begin{equation} \label{giotto}
(C_{\varphi})_{[n]} \to C_0, 
\end{equation}
in the topology of bounded convergence on $H_b(B_{X})$ and $C_{\varphi} : H_{b} (B_{X}) \to H_{b}(B_{X})$ is uniformly mean ergodic.
\end{teo}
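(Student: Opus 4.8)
The plan is to prove the stronger assertion $C_{\varphi^{n}}\to C_{0}$ in the topology of bounded convergence first, and then to deduce \eqref{giotto} and uniform mean ergodicity by a Ces\`aro averaging argument. Recall that a subset $\mathcal{B}\subseteq H_{b}(B_{X})$ is bounded if and only if $M_{s}:=\sup_{f\in\mathcal{B}}\sup_{\|y\|\le s}|f(y)|<\infty$ for every $0<s<1$, and that every $B_{X}$-bounded set is contained in some closed ball $\{\|y\|\le s\}$. Hence it suffices to show that for every bounded $\mathcal{B}$ and every $B_{X}$-bounded set $A$ one has $\sup_{f\in\mathcal{B}}\sup_{x\in A}|f(\varphi^{n}(x))-f(0)|\to 0$.

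The first step is to control the size of the iterates. Writing $A\subseteq\{\|x\|\le t\}$ with $t:=\sup_{x\in A}\|x\|<1$, I would set $r_{n}:=\sup_{x\in A}\|\varphi^{n}(x)\|$ and $\mu(s):=\sup_{\|y\|\le s}\|\varphi(y)\|$ for $0<s<1$. Hypothesis \eqref{boulanger}, together with the continuity of $\varphi$ (used to pass from the open ball to its closure), gives $\mu(s)<s$ for every $s$, and letting $t\to 0$ it also forces $\varphi(0)=0$, so $\mu(0)=0$. Since $\varphi^{n}(A)\subseteq\{\|y\|\le r_{n}\}$ implies $\varphi^{n+1}(A)\subseteq\varphi(\{\|y\|\le r_{n}\})$, we obtain $r_{n+1}\le\mu(r_{n})$; thus $(r_{n})$ is non-increasing and converges to some $\ell\ge 0$. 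The claim is that $\ell=0$, and this is where the main work lies.

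The main obstacle is exactly proving $r_{n}\to 0$, equivalently that the limit value $\ell$ cannot be a positive fixed point of $\mu$. If $\ell>0$ then $\ell=\lim r_{n+1}\le\lim\mu(r_{n})$, and I would like to conclude $\lim\mu(r_{n})=\mu(\ell)<\ell$, a contradiction. The delicate point is that in infinite dimensions closed balls are not compact, so for a merely continuous $\varphi$ the function $\mu$ need not be right-continuous and the limit cannot be passed inside. The resolution uses that $\varphi$ is \emph{holomorphic}: for each $\|x_{0}\|=1$ the scalar functions $\lambda\mapsto x^{*}(\varphi(\lambda x_{0}))$ are holomorphic on $\mathbb{D}$, so $\lambda\mapsto\|\varphi(\lambda x_{0})\|=\sup_{\|x^{*}\|\le 1}|x^{*}(\varphi(\lambda x_{0}))|$ is subharmonic, and by Hadamard's three-circles theorem $s\mapsto\max_{|\lambda|\le s}\|\varphi(\lambda x_{0})\|$ is convex in $\log s$. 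Taking the supremum over $\|x_{0}\|=1$ exhibits $\mu$ as a supremum of convex functions of $\log s$, hence $\mu$ is convex in $\log s$ and therefore continuous on $(0,1)$. With continuity in hand, $\ell=\mu(\ell)$ makes $\ell$ a fixed point of $\mu$ in $[0,1)$; since $\mu(s)<s$ for $s>0$, necessarily $\ell=0$, i.e. $r_{n}\to 0$.

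The remaining steps are routine. Expanding $f\in H_{b}(B_{X})$ in its Taylor series at $0$, $f=\sum_{m\ge 0}P_{m}$ with $P_{0}=f(0)$ and $P_{m}\in\mathcal{P}(^{m}X)$ (convergent uniformly on $B_{X}$-bounded sets), the Cauchy inequalities give $\|P_{m}\|\le s^{-m}M_{s}$ for $f\in\mathcal{B}$ and any fixed $t<s<1$. For $x\in A$ and $n$ large enough that $r_{n}<s$ this yields $|f(\varphi^{n}(x))-f(0)|\le\sum_{m\ge 1}\|P_{m}\|\,r_{n}^{m}\le M_{s}\,r_{n}/(s-r_{n})$, which tends to $0$ uniformly in $f\in\mathcal{B}$ and $x\in A$. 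Hence $C_{\varphi^{n}}\to C_{0}$ in the topology of bounded convergence. Finally, since $C_{0}=\frac{1}{n}\sum_{k=0}^{n-1}C_{0}$, the estimate $\sup_{f\in\mathcal{B},\,x\in A}|((C_{\varphi})_{[n]}-C_{0})(f)(x)|\le\frac{1}{n}\sum_{k=0}^{n-1}\sup_{f\in\mathcal{B},\,x\in A}|(C_{\varphi^{k}}-C_{0})(f)(x)|$ shows that the Ces\`aro means of a null sequence are null, which proves \eqref{giotto} and the uniform mean ergodicity of $C_{\varphi}$.
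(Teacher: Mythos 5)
Your proof is correct, but it takes a genuinely different route from the paper's. The paper runs entirely on the Schwarz lemma (Lemma~\ref{schwarz}): applied to $x \rightsquigarrow \tfrac{1}{\rho}\varphi(tx)$ it yields the geometric decay $\Vert \varphi^{n}(x)\Vert \le (\rho/t)^{n}\Vert x\Vert$ on $tB_{X}$ (this is \eqref{rachmaninov}), and applied once more to $g(x)=\tfrac{1}{2\Vert f\Vert_{tB_{X}}}\big(f(\varphi(tx))-f(0)\big)$ it turns that decay into $\Vert C_{\varphi}^{n}(f)-C_{0}(f)\Vert_{tB_{X}}\le 2\Vert f\Vert_{tB_{X}}(\rho/t)^{n-1}$, a bound which is automatically uniform over bounded subsets of $H_{b}(B_{X})$. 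You replace both uses: the shrinking of the iterates is obtained qualitatively, by showing that the radius function $\mu(s)=\sup_{\Vert y\Vert\le s}\Vert\varphi(y)\Vert$ satisfies $\mu(s)<s$ and is continuous (via subharmonicity of $\lambda\mapsto\Vert\varphi(\lambda x_{0})\Vert$ and Hadamard's three-circles theorem), so that the non-increasing sequence $r_{n}$ cannot have a positive limit; and the passage from $r_{n}\to 0$ to convergence of the operators is done with the Taylor expansion at $0$ plus the Cauchy inequalities, giving $\sup_{f\in\mathcal{B}}\sup_{x\in A}|f(\varphi^{n}(x))-f(0)|\le M_{s}\,r_{n}/(s-r_{n})$. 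Both halves are sound; in particular, your continuity-of-$\mu$ argument is exactly the right fix for the infinite-dimensional obstruction you identify (closed balls are not compact, so right-continuity of $\mu$ is not automatic). The trade-off: the paper's argument is shorter and gives an explicit geometric rate $(\rho/t)^{n-1}$, whereas yours yields convergence with no rate; in exchange, your scheme cleanly separates the dynamical statement (orbits of $B_{X}$-bounded sets shrink to $0$) from the function-theoretic one (Cauchy estimates), which makes it more modular. Note finally that the Schwarz lemma would also have simplified your first half: since $\varphi(0)=0$ and $\varphi(tB_{X})\subseteq\rho B_{X}$, homogeneity of the Schwarz bound gives $\mu(s)\le\tfrac{\rho}{t}\,s$ for $s<t$ directly, so the three-circles machinery, while correct, can be bypassed.
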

\begin{proof}
Fix some $0 < t <1$. First of all, \eqref{boulanger} implies, on the one hand, that $\varphi^n(tB_X)\subset \rho B_X$ for every $n\in \N$ and, on the other hand, that $\varphi(0)=0$. We can then apply Lemma~\ref{schwarz} to the function
$[x \rightsquigarrow \frac{1}{\rho} \varphi (tx) ]$ and get
\begin{equation} \label{rachmaninov}
\Vert \varphi^{n} (x) \Vert \leq \Big( \frac{\rho}{t} \Big)^{n} \Vert x \Vert,
\end{equation}
for every $x \in t B_{X}$ and $n \in \mathbb{N}$. Now, given $f \in H_{b}(B_{X})$, we obviously have $\|f\circ\varphi^n\|_{tB_{X}}\leq \|f\|_{tB_{X}}$ for every $n\in \N$. We define $g : B_{X} \to \mathbb{D}$ by $g(x) = \frac{1}{2 \Vert f \Vert_{tB_{X}}}\big(f ( \varphi(tx)) - f(0)\big)$. This is clearly holomorphic and satisfies $g(0)=0$. Then we can apply Lemma~\ref{schwarz} to $g$ and  \eqref{rachmaninov} to obtain
\[
\Vert C_{{\varphi}}^n (f) - C_0(f) \Vert_{tB_{X}}
= \sup_{x\in tB_{X}} |f({\varphi}^n(x))-f(0)| 
\leq 2\Vert f\Vert_{tB_{X}} \sup_{x\in tB_{X}} \Vert {\varphi}^{n-1}(x) \Vert
\leq 2\Vert f\Vert_{tB_{X}} \Big( \frac{\rho}{t} \Big)^{n-1} \,.
\]
This implies, for every $0<t<1$ and every bounded set $A \subseteq H_{b}(B_{X})$,
\[
\lim_{n\to \infty} \sup_{f\in A} \sup_{x\in tB_{X}} |C_{{\varphi}^n}(f)(x) - f(0)| =0.
\]
 Hence, $C_{{\varphi}^n} \to C_0$ in the topology of bounded convergence.
Once we have this, \eqref{giotto} is a straightforward consequence.
\end{proof}

\begin{nota}\label{squared in l_2}
If $\varphi:B_{X} \to  B_{X}$ is holomorphic and satisfies
\begin{equation} \label{dvorak}
\varphi(B_{X})\subseteq rB_{X} \, \text{ for some } \, 0<r <1  \, \text{ and } \, \varphi (0) =0 \,,
\end{equation}
then, applying Lemma~\ref{schwarz} to the function $[x \rightsquigarrow \frac{1}{r} \varphi (x) ]$ we get $\Vert \varphi (x) \Vert \leq r \Vert x \Vert$ for every $x \in B_{X}$, and this implies that $\varphi$ satisfies \eqref{boulanger} with $\rho=tr$. 

There are, however, functions satisfying \eqref{boulanger} but not \eqref{dvorak}. To see this just consider the restriction to $B_{X}$ of any $m$-homogeneous polynomial (for $m>1$) $P: X \to X$ with $\Vert P \Vert \leq 1$. For a fixed $0<t<1$ take any $0 < \varepsilon < t - t^{m}$ and note that
\[ 
\| P(tx)\| \leq t^{m} \Vert x \Vert^{m} \leq  ( t- \varepsilon) \Vert x \Vert,
\]
for every $x \in B_{X}$. That is, every homogeneous polynomial with norm $\leq 1$ satisfies \eqref{boulanger}. If $\Vert P \Vert=1$ and attains its norm (that is, there is $x_{0}$ with $\Vert x_{0} \Vert =1$ so that $\Vert P(x_{0} ) \Vert = \Vert P \Vert$) then
\[
\big\Vert P \big( (1- \tfrac{1}{n}) x_{0} \big) \big\Vert = \Big( 1 - \frac{1}{n} \Big)^{m} \,,
\]
and there is no $0<r<1$ so that $P(B_{X}) \subseteq r B_{X}$. For a concrete example of such a polynomial just consider the $2$-homogeneous one $P : \ell_{2} \to \ell_{2}$ given by $P\big( (x_{n})_{n} \big) = (x_{n}^{2})_{n}$ (in this case one can take $x_{0} = e_{1}$).

In particular, we have that, if $m>1$ and $P$ is an $m$-homogeneous polynomial with $\Vert P \Vert \leq 1$, then $C_{P} : H_{b} (B_{X}) \to H_{b}(B_{X})$ is uniformly mean ergodic. For $m=1$, that is, for linear operators, this property does not hold,
as Proposition~\ref{beethoven} shows.
\end{nota}

One may also ask if in \eqref{dvorak} we can drop the condition on the fixed point and still get \eqref{boulanger} just assuming that $\varphi(B_{X})\subseteq rB_{X}$ for some  $0<r <1$ . But this is not 
the case: fix some $x_{0} \in B_{X}$ and consider the constant function $\varphi(x) = x_{0}$ for every $x \in B_{X}$.

\subsection{Example of a composition operator which is mean ergodic but not uniformly mean ergodic in $H_b(B_{c_0})$}

The following result is well known \cite[\S39,4(1), p.~138]{K2}.
\begin{lema}\label{lemma 2.1 david}
Let $(T_n)_n$ be a sequence of equicontinuous operators on a locally convex space $E$. If $(T_n)$ is pointwise convergent to a continuous operator $T$ on some dense set $D\subseteq E$, then $(T_n)_n$ is pointwise convergent to $T$ in $E$.
\end{lema}

We also need the following property \cite[Theorem~15.60]{libro_2019}.
\begin{teo}\label{15.58 llibre}
For each $m\in\N$, the set $A_m:=\{x^\alpha: |\alpha|=m\}$ of monomials generates a dense subspace of $\mathcal{P}(^m c_0)$.
\end{teo}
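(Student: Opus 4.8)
The plan is to approximate an arbitrary $P\in\mathcal{P}(^m c_0)$ by its \emph{finite sections}. Let $\pi_N\colon c_0\to c_0$ be the projection onto the first $N$ coordinates, $\pi_N(x)=(x_1,\dots,x_N,0,0,\dots)$, and set $Q_N:=\id-\pi_N$; since $c_0$ carries the supremum norm, both are contractions. For every $P$ the composition $P\circ\pi_N$ depends only on $x_1,\dots,x_N$, so it is a polynomial on $\C^N$ and hence a finite linear combination of monomials $x^\alpha$ with $|\alpha|=m$ and $\supp\alpha\subseteq\{1,\dots,N\}$; in particular $P\circ\pi_N\in\spn A_m$. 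Thus the whole statement reduces to the single claim that
\[
\|P-P\circ\pi_N\|\longrightarrow 0\quad(N\to\infty)\quad\text{in }\mathcal{P}(^m c_0).
\]

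To study this remainder I would pass to the symmetric $m$-linear form $L$ with $P(x)=L(x,\dots,x)$, write $x=\pi_Nx+Q_Nx$, and expand by multilinearity. The slot-pattern in which $\pi_Nx$ occupies every slot reproduces $P(\pi_Nx)$, so
\[
P(x)-P(\pi_Nx)=\sum_{k=1}^{m}\binom{m}{k}\,L\bigl((Q_Nx)^{k},(\pi_Nx)^{m-k}\bigr),
\]
the exponents recording how many slots carry each vector. As $\|\pi_Nx\|,\|Q_Nx\|\le\|x\|$, it is enough to show that each \emph{mixed} (head/tail) form has vanishing norm, namely that for every $1\le k\le m$
\[
\sup_{\|x\|\le 1}\bigl|L\bigl((Q_Nx)^{k},(\pi_Nx)^{m-k}\bigr)\bigr|\longrightarrow 0\quad(N\to\infty).
\]

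This is the step where I expect the genuine difficulty, and the only place where the geometry of $c_0$ is essential: the analogous limit is false on $\ell_2$, since $P(x)=\sum_n x_n^2$ satisfies $\|P-P\circ\pi_N\|\ge 1$ for all $N$. What saves $c_0$ is the classical fact that \emph{every} continuous $m$-homogeneous polynomial on $c_0$ is weakly continuous on bounded sets and therefore \emph{approximable}, i.e.\ a norm limit of finite type polynomials $\sum_l\prod_{k=1}^{m}\phi_{l,k}$ with $\phi_{l,k}\in c_0'=\ell_1$ (see \cite{Di99,mujica}). For such a finite type polynomial $F$ the section convergence $\|F-F\circ\pi_N\|\to 0$ is immediate, because $F\circ\pi_N$ is produced by truncating each $\phi_{l,k}$ inside $\ell_1$ and the product map is continuous; and since $\pi_N$ is a contraction one has $\|(P-F)\circ\pi_N\|\le\|P-F\|$, which transfers the convergence from $F$ to $P$ and yields the displayed limit.

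With the crux available the theorem follows from the reduction in the first paragraph. I note that one may also finish without finite sections: the same approximability shows that finite type polynomials are dense in $\mathcal{P}(^m c_0)$, and each finite type polynomial already lies in $\overline{\spn}\,A_m$, because every $\phi\in\ell_1$ is an $\ell_1$-limit of finitely supported functionals (finite combinations of the coordinate functionals $x_i$) and the product map $(\ell_1)^m\to\mathcal{P}(^m c_0)$ is bounded, so expanding a product of finitely supported functionals gives a finite combination of monomials $x^\alpha$ with $|\alpha|=m$. Either way the single non-formal ingredient, and the main obstacle, is the approximability of polynomials on $c_0$ --- equivalently the vanishing of the mixed head/tail forms above --- which is exactly the property distinguishing $c_0$ from a general Banach space.
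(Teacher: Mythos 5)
Your proposal is correct, but there is no in-paper proof to compare it against: the paper imports this statement verbatim from the cited monograph of Defant, Garc\'ia, Maestre and Sevilla-Peris (Theorem~15.60 there) and proves nothing itself, so your argument supplies a proof where the paper supplies only a citation. What you give is essentially the standard argument from the literature. The reduction is sound: $P\circ\pi_N\in\spn A_m$, and since $\pi_N(B_{c_0})\subseteq B_{c_0}$ one has $\|(P-F)\circ\pi_N\|\le\|P-F\|$, so section convergence transfers from any dense class of polynomials to all of $\mathcal{P}(^m c_0)$. The crux you isolate --- that every $P\in\mathcal{P}(^m c_0)$ is approximable, via Pe\l{}czy\'{n}ski's weak (sequential) continuity of polynomials on $c_0$ together with the approximation property of $c_0'=\ell_1$ --- is a genuine classical theorem, correctly attributed to the books of Dineen and Mujica; since the paper itself rests the whole statement on a book citation, resting your proof on this cited fact is legitimate, and you are right that it is the only non-formal ingredient. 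Two remarks. First, your second route is the cleaner and already complete one: finite type polynomials are dense, and each lies in $\overline{\spn}\,A_m$ because truncation of an $\ell_1$-functional converges in the $\ell_1$-norm and the $m$-fold product map $(\ell_1)^m\to\mathcal{P}(^m c_0)$ is bounded. By contrast, the binomial decomposition of $P-P\circ\pi_N$ into the mixed forms $L\bigl((Q_Nx)^k,(\pi_Nx)^{m-k}\bigr)$ in your second paragraph is scaffolding you never use --- you never estimate those mixed terms, since the approximability argument bypasses them --- so it could be deleted without loss. Second, the $\ell_2$ example $P(x)=\sum_n x_n^2$ is a correct sanity check that some special property of $c_0$ must enter.
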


\begin{nota}\label{densitat}
Since $B_{c_0}$ is a balanced set, the polynomials are dense on $H_b(B_{c_0})$. Therefore, by Theorem~\ref{15.58 llibre} we have that the set 
\[ 
\spn \{x^\alpha: \alpha\in \N_0^{(\N)} \} 
\]
is dense on $H_b(B_{c_0})$.
\end{nota}

\begin{prop} \label{beethoven}
Let $F: B_{c_0} \to B_{c_0}$ be the forward shift. The composition operator $C_F:H_b(B_{c_0})\rightarrow H_b(B_{c_0})$ is mean ergodic but not uniformly mean ergodic.
\end{prop}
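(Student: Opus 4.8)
The plan is to handle the two claims separately, both built on the explicit action of $C_F$ on monomials. Writing monomials as $x^\alpha$ with $\alpha\in\N_0^{(\N)}$, the identities $(Fx)_1=0$ and $(Fx)_i=x_{i-1}$ for $i\ge 2$ give $C_F(x^\alpha)=0$ whenever $\alpha_1>0$, while $C_F(x^\alpha)=x^\beta$ with $\beta_i=\alpha_{i+1}$ when $\alpha_1=0$. Iterating, $C_F^n(x^\alpha)$ equals the monomial with index shifted $n$ places down when $\alpha_1=\cdots=\alpha_n=0$, and vanishes as soon as one of $\alpha_1,\dots,\alpha_n$ is positive. Since each $\alpha$ has finite support, for every $\alpha\neq 0$ the orbit $(C_F^k(x^\alpha))_k$ is eventually $0$, whereas $C_F^k(1)=1$ for all $k$. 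Hence the Ces\`aro means satisfy $(C_F)_{[n]}(x^\alpha)\to 0$ for $\alpha\neq0$ and $(C_F)_{[n]}(1)=1$; in other words $(C_F)_{[n]}\to C_0$ pointwise on $\spn\{x^\alpha\}$.

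For mean ergodicity I would first note that $C_F$ is power bounded: $F$ has $B_{c_0}$-stable orbits by Example~\ref{shifts}, so Theorem~\ref{pbddHb} applies and $\{C_F^n\}_n$, and therefore the sequence of averages $((C_F)_{[n]})_n$, is equicontinuous. Since $\spn\{x^\alpha:\alpha\in\N_0^{(\N)}\}$ is dense in $H_b(B_{c_0})$ by Remark~\ref{densitat} and $C_0$ is continuous, Lemma~\ref{lemma 2.1 david} promotes the pointwise convergence just obtained to $(C_F)_{[n]}\to C_0$ pointwise on all of $H_b(B_{c_0})$. This gives mean ergodicity, with limit operator $C_0$.

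For the failure of uniform mean ergodicity it suffices, since the pointwise limit is already $C_0$, to produce a bounded set on which $(C_F)_{[n]}\to C_0$ is not uniform. I would test against the set $A=\{x_m:m\in\N\}$ of coordinate functionals, which is bounded because $\|x_m\|_{tB_{c_0}}=t$ for every $m$ and every $0<t<1$. As $C_F^k(x_m)=x_{m-k}$ for $k<m$ and $0$ otherwise, for $m\ge n$ the average $(C_F)_{[n]}(x_m)=\frac1n\big(x_m+x_{m-1}+\cdots+x_{m-n+1}\big)$ is a normalized sum of $n$ distinct coordinate functionals; a sum of $n$ distinct coordinate functionals has $\|\cdot\|_{tB_{c_0}}$-norm equal to $tn$, so $\|(C_F)_{[n]}(x_m)\|_{tB_{c_0}}=\tfrac1n\cdot tn=t$, while $C_0(x_m)=0$. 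Thus $\sup_{g\in A}\|(C_F)_{[n]}(g)-C_0(g)\|_{tB_{c_0}}\ge t$ for every $n$, so the convergence is not uniform on $A$ and $C_F$ is not uniformly mean ergodic.

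Conceptually, the same obstruction can be read off the complemented copy $J(\ell_1)$ of $\ell_1=c_0'$ inside $H_b(B_{c_0})$ provided by the construction in the proof of Proposition~\ref{PbdNoMeHBB}: it is a closed $C_F$-invariant subspace on which $C_F$ is conjugate to the backward shift $B$ on $\ell_1$ (indeed $C_F\circ J=J\circ B$), and a direct computation gives $\|B_{[n]}\|_{\ell_1\to\ell_1}=1$ for all $n$, so $B$ is not uniformly mean ergodic; since uniform mean ergodicity descends to closed invariant subspaces, neither is $C_F$. The only genuinely delicate point, and the exact reason the two modes of ergodicity separate here, is this lower bound: the averaged coordinate functionals ``spread out'' rather than shrink, keeping their $tB_{c_0}$-norm equal to $t$, which is precisely the $\ell_1$ phenomenon $\|B_{[n]}\|=1$ that defeats uniform convergence even though every individual orbit is Ces\`aro-null.
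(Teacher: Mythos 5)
Your proof is correct, and its first half coincides with the paper's argument: mean ergodicity is obtained exactly as in the paper, via power boundedness of $C_F$ (from $B_{c_0}$-stable orbits and Theorem~\ref{pbddHb}), equicontinuity of the Ces\`aro means, the vanishing of $C_F^k(x^\alpha)$ for large $k$ on monomials, density of $\spn\{x^\alpha\}$ (Remark~\ref{densitat}), and Lemma~\ref{lemma 2.1 david}. Where you genuinely diverge is the failure of uniform mean ergodicity. The paper transfers the problem out of $H_b(B_{c_0})$: it uses the complemented copy $J(\ell_1)$ of $c_0'=\ell_1$, the identity $P\circ C_F\circ J=B$, and the known failure of uniform mean ergodicity of the backward shift $B$ on $\ell_1$ (computed on the vectors $e_N$). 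You instead work directly inside $H_b(B_{c_0})$: you test the Ces\`aro means against the bounded set $A=\{x_m\}$ of coordinate functionals and compute $\|(C_F)_{[n]}(x_m)\|_{tB_{c_0}}=t$ for $m\ge n$, so convergence to the pointwise limit $C_0$ cannot be uniform on $A$. Since $x_m=J(e_m)$, this is the paper's $\ell_1$ computation transported through $J$, but your version is self-contained: it needs neither the complementation machinery nor the (true, but requiring justification) fact that uniform mean ergodicity passes to closed invariant subspaces or through the factorization $P\circ C_F\circ J$. The paper's route, in exchange, isolates the obstruction as a purely Banach-space phenomenon ($B$ not uniformly mean ergodic on $\ell_1$) and reuses the construction already set up in Proposition~\ref{PbdNoMeHBB}. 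One small caveat on your closing aside: the implication ``$\|B_{[n]}\|_{\ell_1\to\ell_1}=1$ for all $n$, hence $B$ is not uniformly mean ergodic'' needs the additional observation (made explicit in the paper) that $B^jx\to 0$ for every $x\in\ell_1$, so the only possible limit of $B_{[n]}$ is $0$; your main argument does not have this gap, because you established first that the pointwise limit is $C_0$.
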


\begin{proof}
First, we see that $C_F$ is mean ergodic. We follow a similar scheme to that in \cite[Theorem~2.2]{BGJJ2016m}, using that $\spn \{x^\alpha: \alpha\in \N_0^{(\N)} \}$ is a dense subspace of $H_b(B_{c_0})$ (Remark~\ref{densitat}) together with Lemma~\ref{lemma 2.1 david}.  Since $C_F$ is power bounded on $H_b(B_{c_0})$ (because it has $B_{c_0}$-stable orbits), $(C_F^n)_n$ is equicontinuous. Therefore, $\left((C_F)_{[n]}\right)_n$ is also equicontinuous on $H_b(B_{c_0})$. 
Since $C_F(1)=1=C_0(1)$ for any constant mapping (this is in fact true for any composition operator), it remains  to see that $\left((C_F)_{[n]}(h)\right)_n$ $\tau_{b}$-converges to $C_0(h)$ for every  $h\in A_m$ and $m>0$ (on these cases $C_0(h)=0$). 
For $h(x)=x^\alpha$ with $|\alpha|=m$, we define $n_h=\max \{j\in \N: (\alpha)_j\neq 0\}$ which is a finite number. Observe that $C_F^n(h)=C_F^n(x^{\alpha})= (F^n(x))^{\alpha}=0$ for all $n\geq n_h$, and the claim follows. 

As in the proof of Proposition~\ref{PbdNoMeHBB}, one can see that $P\circ C_F\circ J=B$, where $B:\ell_1 \to \ell_1$ is the backward shift (recall  \eqref{def:shifts}). If $C_F$ were uniformly mean ergodic on $H_b(B_{c_0})$, then $B:\ell_1 \to \ell_1$ would be uniformly mean ergodic, but this is not the case. Indeed, since $B^j x$ tends to $0$ in $\ell_1$ for all $x\in \ell_1$, the only possible value for the limit projection of $\frac{1}{N}\sum_{j=0}^{N-1} B^j$ is $0$. But, for each $N\in\N$, we have 
\[
	\sup_{\|x\|\le 1} \Big\|\frac{1}{N}\sum_{j=0}^{N-1} B^j(x)\Big\|_{\ell_1}\ge \frac{1}{N} \Big\|\sum_{j=0}^{N-1} B^j(e_N)\Big\|_{\ell_1}= \frac{1}{N}\big\|(1,\stackrel{(N)}{\ldots},1,0,\ldots)\big\|_{\ell_1}=1.
\]
And it is not true that 
\[
\lim_{N\to \infty} \Big\|\frac{1}{N} \sum_{j=0}^{N-1} B^j\Big\|=0.
\]
\end{proof}

\subsection{The Hilbert-space case} 
Let us go back to \eqref{dvorak} for a moment. If we only assume $\varphi(B_{X}) \subseteq r B_{X}$, the Earle-Hamilton fixed point theorem \cite{Earle-Hamilton} implies that there exists a unique $a\in B_{X}$ such that $\varphi(a)=a$. It is then natural to ask if this is enough to ensure that the composition operator is uniformly mean ergodic. If we restrict ourselves to Hilbert spaces $H$ we can say something in this respect. We need the following lemma.

\begin{lema} \label{johannchristian}
Let $\varphi : B_{H} \to B_{H}$ be holomorphic so that $C_{\varphi^{n}} \to C_{0}$ in the topology of bounded convergence of $L(H_b(B_{H}))$. Then for every $a \in B_{H}$ the mapping $\overline{\varphi} = \alpha_{a} \circ \varphi \circ \alpha_{a}$ satisfies that $C_{\overline{\varphi}^{n}} \to C_{a}$ in the topology of bounded convergence of $L(H_b(B_{H}))$.
\end{lema}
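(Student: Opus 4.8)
The plan is to reduce the statement to the hypothesis $C_{\varphi^n}\to C_0$ by conjugating with the composition operator induced by $\alpha_a$, exploiting that $\alpha_a$ is an involution. First I would record two algebraic facts about composition operators. Since $\alpha_a^{-1}=\alpha_a$, i.e. $\alpha_a^2=\id$, the telescoping already used in Lemma~\ref{compos stable} gives $\overline{\varphi}^{\,n}=\alpha_a\circ\varphi^n\circ\alpha_a$ for every $n$. Combining this with the contravariant law $C_{\psi\circ\chi}=C_\chi\circ C_\psi$ turns it into the operator identity
\[
C_{\overline{\varphi}^{\,n}}=C_{\alpha_a}\circ C_{\varphi^n}\circ C_{\alpha_a}.
\]
Before using it I must check that $C_{\alpha_a}$ is a well-defined continuous operator on $H_b(B_H)$. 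This holds because $\alpha_a$ is of bounded type: by Lemma~\ref{auto rB} it sends $B_H$-bounded sets to $B_H$-bounded sets, so $C_{\alpha_a}\in L(H_b(B_H))$.

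Next I would identify the target operator $C_a$ in the same language. A direct computation shows
\[
C_{\alpha_a}\circ C_0\circ C_{\alpha_a}=C_a,
\]
since for $f\in H_b(B_H)$ one has $C_0(C_{\alpha_a}(f))=(f\circ\alpha_a)(0)=f(\alpha_a(0))=f(a)$, the constant function with value $f(a)$, and applying $C_{\alpha_a}$ to a constant leaves it unchanged, so the composite is exactly $f\mapsto f(a)=C_a(f)$.

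With these two identities in hand, the convergence is routine. Since $C_{\alpha_a}$ is a continuous linear operator on the Fréchet space $H_b(B_H)$, it maps bounded sets to bounded sets. Hence, given a bounded set $A\subseteq H_b(B_H)$, the set $C_{\alpha_a}(A)$ is bounded, and the hypothesis yields $C_{\varphi^n}\to C_0$ uniformly on $C_{\alpha_a}(A)$; composing on the outside with the continuous operator $C_{\alpha_a}$ preserves this uniform convergence, because every continuous seminorm $p$ on $H_b(B_H)$ satisfies $p(C_{\alpha_a}\,\cdot\,)\le q(\cdot)$ for some continuous seminorm $q$. Therefore $C_{\overline{\varphi}^{\,n}}=C_{\alpha_a}\circ C_{\varphi^n}\circ C_{\alpha_a}\to C_{\alpha_a}\circ C_0\circ C_{\alpha_a}=C_a$ in the topology of bounded convergence, as required.

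The argument is essentially mechanical once the two identities are spotted, so I do not expect a genuine obstacle. The only points needing care are that $C_{\alpha_a}$ is well defined on $H_b(B_H)$ (supplied by Lemma~\ref{auto rB}) and the bookkeeping that sandwiching a boundedly convergent sequence between a bounded-set-preserving operator on the inside and a continuous operator on the outside preserves convergence in the topology of bounded convergence. Both are straightforward, and the heart of the proof is the conjugation identity together with the computation $C_{\alpha_a}\circ C_0\circ C_{\alpha_a}=C_a$.
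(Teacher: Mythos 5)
Your proof is correct and follows essentially the same route as the paper's: the involution identity $\overline{\varphi}^{\,n}=\alpha_a\circ\varphi^n\circ\alpha_a$, the resulting operator identity $C_{\overline{\varphi}^{\,n}}=C_{\alpha_a}\circ C_{\varphi^n}\circ C_{\alpha_a}$ with $C_{\alpha_a}$ well defined on $H_b(B_H)$ via Lemma~\ref{auto rB}, and the identification $C_{\alpha_a}\circ C_0\circ C_{\alpha_a}=C_a$. The only difference is that you spell out the bookkeeping for passing to the limit in the topology of bounded convergence, which the paper leaves implicit.
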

\begin{proof}
Since both $\varphi$ and $\alpha_{a}$ are of bounded type (see Lemma~\ref{auto rB}), the composition $\alpha_{a} \circ \varphi \circ \alpha_{a}$ is of bounded type and $C_{\overline{\varphi}} : H_{b} (B_{H}) \to H_{b}(B_{H})$ is well defined. Observe  now that $\overline{\varphi}^n=\alpha_a\circ \varphi^n \circ \alpha_a$ for all $n\in \N$ since $\alpha_a^{-1}=\alpha_a$. Then
\[
C_{\overline{\varphi}^{n}} = C_{\alpha_a\circ \varphi^n \circ \alpha_a} = C_{\alpha_a}\circ C_{\varphi^n} \circ C_{\alpha_a}
\to C_{\alpha_a}\circ C_0 \circ C_{\alpha_a}= C_{\alpha_a}\circ C_{\alpha_a(0)}  = C_{\alpha_a}\circ C_{a}=C_{a}.
\]
\end{proof}

\begin{prop} \label{enigma}
Let $\varphi:B_{H} \to  B_{H}$ be holomorphic such that 
\begin{equation} \label{eskenian}
\varphi(B_{H})\subseteq rB_{H} \, \text{ for some } \, 0<r <1\,.
\end{equation} 
Then, for the unique $a\in B_{H}$ such that $\varphi(a)=a$ we have $
 C_{{\varphi}^n} \to C_a$ 
in the topology of bounded convergence of $L(H_b(B_{H}))$. In particular $(C_{\varphi})_{[n]} \to C_a$, and  $C_{\varphi} : H_{b} (B_{H}) \to H_{b}(B_{H})$ is uniformly mean ergodic.
\end{prop}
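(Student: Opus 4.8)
The plan is to reduce the general case to the situation already handled in Theorem~\ref{janacek} by moving the fixed point $a$ to the origin via the automorphism $\alpha_a$. The Earle-Hamilton theorem guarantees that there is a unique $a \in B_H$ with $\varphi(a)=a$, so this $a$ is well defined. The key observation is that the conjugated map $\overline{\varphi}=\alpha_a\circ\varphi\circ\alpha_a$ fixes the origin, because $\overline{\varphi}(0)=\alpha_a(\varphi(\alpha_a(0)))=\alpha_a(\varphi(a))=\alpha_a(a)=0$, exactly as in the proof of Proposition~\ref{phi-bar-Bso}.

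The main work is to verify that $\overline{\varphi}$ satisfies the hypothesis \eqref{boulanger} of Theorem~\ref{janacek}, i.e.\ that for every $0<t<1$ there is $0<\rho<t$ with $\overline{\varphi}(tB_H)\subseteq\rho B_H$. First I would fix $0<t<1$. By Lemma~\ref{auto rB} applied to $\alpha_a$ there is some $0<t'<1$ with $\alpha_a(tB_H)\subseteq t'B_H$; enlarging $t'$ if necessary I may assume $t'$ is close to $1$. Then $\varphi(\alpha_a(tB_H))\subseteq\varphi(B_H)\subseteq rB_H$ by the standing hypothesis \eqref{eskenian}, so $\varphi\circ\alpha_a$ maps $tB_H$ into $rB_H$ regardless of $t$. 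Applying $\alpha_a$ once more and invoking Lemma~\ref{auto rB} again, $\alpha_a(rB_H)\subseteq\rho_0 B_H$ for some fixed $0<\rho_0<1$ depending only on $r$ (not on $t$). This already gives $\overline{\varphi}(tB_H)\subseteq\rho_0 B_H$ with a uniform bound $\rho_0$.

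The subtle point is that Theorem~\ref{janacek} requires the stronger inequality $\rho<t$, and the uniform bound $\rho_0$ from the previous paragraph need not be smaller than $t$ when $t$ is small. I expect this to be the main obstacle, and the way around it is to exploit that $\overline\varphi(0)=0$: since $\overline\varphi(B_H)\subseteq\rho_0 B_H$ for a fixed $\rho_0<1$ and $\overline\varphi(0)=0$, Remark~\ref{squared in l_2} (the implication \eqref{dvorak}$\Rightarrow$\eqref{boulanger}) applies and yields $\Vert\overline\varphi(x)\Vert\le\rho_0\Vert x\Vert$ via Lemma~\ref{schwarz} applied to $\tfrac{1}{\rho_0}\overline\varphi$; hence $\overline\varphi(tB_H)\subseteq t\rho_0 B_H$ and one may take $\rho=t\rho_0<t$ for every $t$. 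Thus $\overline{\varphi}$ does satisfy \eqref{boulanger}.

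With the hypothesis of Theorem~\ref{janacek} verified for $\overline\varphi$, that theorem gives $C_{\overline{\varphi}^{\,n}}\to C_0$ in the topology of bounded convergence on $H_b(B_H)$. I then apply Lemma~\ref{johannchristian}: since $C_{\overline\varphi^{\,n}}\to C_0$ and $\varphi=\alpha_a\circ\overline\varphi\circ\alpha_a$ (because $\alpha_a^{-1}=\alpha_a$), the conjugation identity in Lemma~\ref{johannchristian}, read in the opposite direction, transports the convergence back to $\varphi$ and produces $C_{\varphi^n}\to C_a$ in the topology of bounded convergence of $L(H_b(B_H))$. Finally, $(C_\varphi)_{[n]}=\tfrac1n\sum_{k=0}^{n-1}C_{\varphi^k}$ is a Ces\`aro average of a sequence converging to $C_a$, so it converges to $C_a$ as well, and since this convergence is uniform on bounded sets, $C_\varphi$ is uniformly mean ergodic. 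This last step is routine once the limit $C_{\varphi^n}\to C_a$ is established.
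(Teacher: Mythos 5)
Your proof is correct and follows essentially the same route as the paper's: conjugate by $\alpha_a$ to move the fixed point to the origin, use Lemma~\ref{auto rB} to see that the conjugated map satisfies \eqref{dvorak}, invoke Theorem~\ref{janacek} to get convergence to $C_0$, and transport back via Lemma~\ref{johannchristian} using $\alpha_a^{-1}=\alpha_a$. The only difference is that you make explicit the step \eqref{dvorak}$\Rightarrow$\eqref{boulanger} via the Schwarz lemma, which the paper subsumes by citing Remark~\ref{squared in l_2} implicitly through Theorem~\ref{janacek}.
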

\begin{proof}
Define $\phi = \alpha_{a} \circ \varphi \circ \alpha_{a} : B_{H} \to B_{H}$, which clearly satisfies $\phi(0)=0$. Also, 
\[ 
\phi(B_{H})=(\alpha_a\circ \varphi \circ\alpha_a)(B_{H})= (\alpha_a\circ \varphi)(B_{H})\subseteq \alpha_a(rB_{H}) \,,
\]
and using Lemma~\ref{auto rB} we can find some $0<\varepsilon<1$ so that
\[ 
\phi(B_{H}) \subseteq (1-\varepsilon) B_{H} \,. 
\]
Then $\phi$ satisfies \eqref{dvorak} and, by Theorem~\ref{janacek}, $C_{\phi^{n}} \to C_{0}$. Since $\varphi = \alpha_{a} \circ \phi \circ \alpha_{a}$ (because $\alpha_{a}^{-1}= \alpha_{a}$), Lemma~\ref{johannchristian} yields the claim.
\end{proof}

Let us consider any analytic self-map $\varphi : B_{X} \to B_{X}$ (being $X$ any Banach space) so that $\varphi \circ \varphi = \id$. Then 
\[
C_{\varphi}^{n} = \begin{cases}
C_{\varphi} \text{ if } n \text{ is odd}, \\
C_{\id_{B_{X}}} = \id_{H_{b}(B_{X})}  \text{ if } n \text{ is even} \,,
\end{cases}
\]
and for each $k \in \mathbb{N}$ we have
\[
(C_{\varphi})_{[2k-1]}=\frac{1}{2k-1}\sum_{n=0}^{2k-1}C_{\varphi}^{n}
= \frac{k}{2k-1} \big( C_{\varphi} +\id_{H_{b}(B_{X})}  \big),
\]
and
\[
(C_{\varphi})_{[2k]}=\frac{1}{2k}\sum_{n=0}^{2k}C_{\varphi}^{n}
= \frac{1}{2} \big( C_{\varphi} +\id_{H_{b}(B_{X})}  \big)+ \frac{1}{2k}\id_{H_{b}(B_{X})}\,.
\]
This implies that $\lim_{n\to\infty}(C_{\varphi})_{[n]}=  \frac{1}{2} \big( C_{\varphi} +\id_{H_{b}(B_{X})}  \big)$ in the topology of bounded convergence  of $L(H_b(B_{X}))$, and 
$C_{\varphi} : H_{b} (B_{X}) \to H_{b}(B_{X})$ is uniformly mean ergodic. Note that $\alpha_{a} : B_{H} \to B_{H}$ (now $H$ being a Hilbert space) satisfies this condition, so that $C_{\alpha_{a}} : H_{b} (B_{H}) \to H_{b}(B_{H})$ is uniformly mean ergodic. However, $\alpha_{a}$ does not satisfy neither \eqref{boulanger} nor \eqref{eskenian}.

\section{The space of bounded holomorphic functions} \label{Hi}

We consider now the space $H^{\infty} (B_{X})$ of all holomorphic functions $f : B_{X} \to \mathbb{C}$ that are bounded. With the norm $\Vert f \Vert_{\infty} = \sup_{x \in B_{X}} \vert f(x) \vert$ it becomes a Banach space. We look at composition operators $C_{\varphi} : H^{\infty} (B_{X}) \to H^{\infty} (B_{X})$. If $\varphi : B_{X} \to B_{X}$, then
\[ 
\|C_\varphi^n(f)\|_{\infty} = \sup_{x\in B_{X}}|C_\varphi^n(f)(x)| 
= \sup_{x\in B_{X}} |f(\varphi^n(x))| 
\leq \sup_{x\in B_{X}} |f(x)| = \|f\|_{\infty} \,,
\]
and $\Vert C_{\varphi}^{n} \Vert \leq 1$ for all $n\in\N$. Hence every $C_{\varphi}$ that is well defined on $H^{\infty} (B_{X})$ is power bounded.  Since $(X', \|\cdot\| ) = (X', \tau_b )$,  the dual space $X'$ is also complemented in $H^{\infty} (B_{X})$, and the same arguments as in Proposition~\ref{PbdNoMeHBB} give examples of 
composition operators $C_{\varphi} : H^{\infty} (B_{X}) \to H^{\infty} (B_{X})$ which are not mean ergodic. However, $X'$ is in general not complemented in $H(B_{X})$ since $(X', \|\cdot\| ) \neq (X', \tau_{0} )$ and  these arguments do not work for $H(B_{X})$.

We give now conditions on the symbol to define a uniformly mean ergodic composition operator on $H^{\infty} (B_{X})$. 

\begin{prop} \label{servicio}
Let $\varphi:B_{X} \to  B_{X}$ be holomorphic such that $\varphi(B_{X})\subseteq rB_{X}$ for some $0<r <1$  and $\varphi (0) =0$. Then 
\[ 
 C_{{\varphi}^n} \to C_0, 
 \]
in the norm operator topology of $L(H^{\infty}(B_{X}))$. In particular, $(C_{\varphi})_{[n]} \to C_0$ , and $C_{\varphi} : H^{\infty} (B_{X}) \to H^{\infty} (B_{X})$ is uniformly mean ergodic.
\end{prop}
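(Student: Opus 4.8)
The plan is to show that $C_{\varphi^n} \to C_0$ in operator norm, since then the Ces\`aro means converge uniformly (on the whole unit ball, hence on bounded sets) to $C_0$ and uniform mean ergodicity follows immediately. The key observation is that the hypotheses here, $\varphi(B_X) \subseteq rB_X$ together with $\varphi(0)=0$, are exactly condition \eqref{dvorak}. By Remark~\ref{squared in l_2}, applying Lemma~\ref{schwarz} to the map $[x \rightsquigarrow \tfrac1r \varphi(x)]$ we obtain the linear decay estimate $\Vert \varphi(x) \Vert \leq r \Vert x \Vert$ for every $x \in B_X$, and iterating this gives $\Vert \varphi^n(x) \Vert \leq r^n \Vert x \Vert \leq r^n$ for all $x \in B_X$ and $n \in \N$.

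Armed with this, I would estimate the operator norm of $C_{\varphi^n} - C_0$ directly. For any $f \in H^\infty(B_X)$ and any $x \in B_X$, the quantity to control is $|f(\varphi^n(x)) - f(0)|$. The clean way to do this, mirroring the argument in Theorem~\ref{janacek}, is to build an auxiliary scalar function that fixes $0$: set
\[
g(x) = \frac{1}{2\Vert f \Vert_\infty}\big( f(\varphi(x)) - f(0) \big),
\]
which is holomorphic, maps $B_X$ into $\mathbb{D}$, and satisfies $g(0)=0$. Lemma~\ref{schwarz} then yields $|g(x)| \leq \Vert x \Vert$, i.e. $|f(\varphi(x)) - f(0)| \leq 2\Vert f \Vert_\infty \Vert x \Vert$ for all $x \in B_X$. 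Replacing $x$ by $\varphi^{n-1}(x)$ and invoking the decay estimate gives
\[
|f(\varphi^n(x)) - f(0)| \leq 2\Vert f \Vert_\infty \Vert \varphi^{n-1}(x) \Vert \leq 2\Vert f \Vert_\infty\, r^{n-1}.
\]

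Taking the supremum over $x \in B_X$ and over $f$ with $\Vert f \Vert_\infty \leq 1$ shows $\Vert C_{\varphi^n} - C_0 \Vert \leq 2 r^{n-1} \to 0$, which is the first assertion. For the ``in particular'' part, I would note that once a sequence of operators converges in norm to a limit $C_0$ satisfying $C_0^2 = C_0$ (here $C_0$ is the rank-one projection $f \mapsto f(0)$, so $C_0(C_\varphi g) = (C_\varphi g)(0) = g(\varphi(0)) = g(0) = C_0(g)$, hence $C_0 C_\varphi = C_0$ and the powers $C_{\varphi^n} = C_\varphi^n$ are a norm-Cauchy sequence converging to a fixed operator), the averages $(C_\varphi)_{[n]} = \tfrac1n \sum_{k=0}^{n-1} C_\varphi^k$ converge in norm to the same limit $C_0$ by a standard Ces\`aro argument. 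Since norm convergence is convergence in the topology of bounded convergence, $C_\varphi$ is uniformly mean ergodic.

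I expect no genuine obstacle here: the setting of $H^\infty(B_X)$ is actually more convenient than the $H_b$ case of Theorem~\ref{janacek}, because the norm is a single sup over all of $B_X$ rather than a family of seminorms over $tB_X$, so there is no need to fix an intermediate radius $t$ or to track a radius-dependent contraction ratio $\rho/t$. The only point requiring minor care is the passage from norm convergence of $(C_{\varphi^n})_n$ to norm convergence of the Ces\`aro means; this is where one uses that the limit is the idempotent $C_0$ and that a norm-convergent sequence has norm-convergent averages with the same limit.
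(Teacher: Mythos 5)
Your proposal is correct and takes essentially the same approach as the paper: the Schwarz lemma (Lemma~\ref{schwarz}) gives both the Lipschitz-type bound at the origin and the decay $\|\varphi^{n}(x)\|\le r^{n}\|x\|$, yielding an operator-norm estimate $\|C_{\varphi^{n}}-C_0\|\le 2r^{n-1}\to 0$ (the paper gets $2r^{n}$ by applying Schwarz to $x\mapsto\tfrac12(f(x)-f(0))$ and then substituting $\varphi^{n}(x)$, rather than to $x\mapsto\tfrac{1}{2\|f\|_\infty}(f(\varphi(x))-f(0))$ as you do), after which the Ces\`aro means converge in norm. One small remark: your appeal to the idempotency of $C_0$ is unnecessary, since the arithmetic means of any norm-convergent sequence converge to the same limit.
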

\begin{proof}
Take some $f \in H^{\infty} (B_{X})$ with $\Vert f \Vert_{\infty} \leq 1$. Defining $g: B_{X} \to \mathbb{D}$ by $g(x) = \frac{1}{2} (f(x) - f(0))$ and using Lemma~\ref{schwarz} we get
\[
\vert f(x) - f(0) \vert \leq 2 \Vert x \Vert
\]
for every $x \in B_{X}$. Proceeding as in \eqref{rachmaninov}, we get that $\Vert \varphi^{n} (x) \Vert \leq r^{n} \Vert x \Vert$
for every $x \in B_{X}$ and $n \in \mathbb{N}$. This yields
\[
\big\vert  f \big(\varphi^{n} (x )\big) - f(0) \big\vert \leq 2 \Vert \varphi^{n} (x) \Vert \leq 2 r^{n} \Vert x \Vert \,.
\]
Therefore
\[
\Vert C_{\varphi}^n - C_0 \Vert_{L(H^\infty(B))} 
= \sup_{\|f\|_{\infty}\leq 1} \, \sup_{x\in B_{X}} \big\vert  f \big(\varphi^{n} (x )\big) - f(0) \big\vert 
\leq 2 \sup_{x\in B_{X}}  \Vert \varphi^{n} (x) \Vert \leq 2 r^{n} \,,
\]
which gives the claim.
\end{proof}

We observe  that the hypothesis in Proposition~\ref{servicio} is exactly the same one as \eqref{dvorak} in Remark~\ref{squared in l_2}. One can ask if the result also holds assuming instead \eqref{boulanger}. This is not the case. We already saw in Remark~\ref{squared in l_2} that the mapping $P : B_{\ell_{2}} \to B_{\ell_{2}}$ given by $P\big( (x_{n})_{n} \big) = (x_{n}^{2})_{n}$ satisfies \eqref{boulanger}. Then, by Theorem~\ref{janacek} the Ces\`{a}ro means of $C_{P}$ converge to $C_0$. Hence, $C_{P}: H_{b}(B_{\ell_{2}} ) \to H_{b} (B_{\ell_{2}})$ is uniformly mean ergodic.

However, the operator $C_{P} : H^{\infty} (B_{\ell_{2}}) \to  H^{\infty} (B_{\ell_{2}})$ is not even mean ergodic. Notice that $H^\infty(B_{\ell_{2}})\subseteq H_b(B_{\ell_{2}})$ and $\tau_{b}$ is weaker than the norm topology. Then if $C_{P}$ were 
mean ergodic,  $\big( (C_{P})_{[n]}(f) \big)_{n}$ should converge in norm to $C_{0}(f)$ for every $f \in H^{\infty} (B_{\ell_{2}})$. Take $f \in H^{\infty}(B_{\ell_{2}})$ given by $f\big((x_{n})_{n} \big) = x_{1}$ and consider  $z_{m} =  (1-\frac{1}{m}) e_{1} \in B_{\ell_{2}}$ for each $m \in \mathbb{N}$. Then $P^{k} (z_{m}   ) = (1-\frac{1}{m})^{2^k} e_{1}$ for every $k$ and
\[
(C_{P})_{[n]} (f) ( z_{m} ) - C_{0}(f) (z_{m})
= \frac{1}{n} \sum_{k=0}^{n-1} f \big( (1-\frac{1}{m})^{2^k} e_{1} \big) - f(0) 
=  \frac{1}{n} \sum_{k=0}^{n-1}  \big( 1-\frac{1}{m} \big)^{2^k} \,.
\]
Thus
\[
\sup_{x \in B_{\ell_{2}}} \vert (C_{P})_{[n]} (f) ( x ) - C_{0}(f) (x) \vert
\geq \sup_{m \in \mathbb{N}}  \frac{1}{n} \sum_{k=0}^{n-1}  \big( 1-\frac{1}{m} \big)^{2^k}  =1 \,,
\]
and $\big( (C_{P})_{[n]}(f) \big)_{n}$ does not converge in norm to $C_{0}(f)$. This finally shows that $C_{P} : H^{\infty} (B_{\ell_{2}}) \to H^{\infty} (B_{\ell_{2}})$ is not mean ergodic. 

The same argument as in Lemma~\ref{johannchristian} and Proposition~\ref{enigma} shows the following.

\begin{prop}\label{Convergence in H^inf}
Let $\varphi:B_{H} \to  B_{H}$ be  analytic such that $\varphi(B_{H})\subseteq rB_{H}$ for some $0<r <1$. Then, for the unique $a\in B$ such that $\varphi(a)=a$ we have that $C_{{\varphi}^n} \to C_a$ in the norm of $L(H^{\infty}(B_{H}))$. In particular $(C_{\varphi})_{[n]} \to C_{a}$ and $C_{\varphi}$ is uniformly mean ergodic.
\end{prop}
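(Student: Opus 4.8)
The proof of Proposition~\ref{Convergence in H^inf} will follow exactly the transfer scheme used to pass from Theorem~\ref{janacek} to Proposition~\ref{enigma}, but now carried out in the Banach space $H^{\infty}(B_{H})$ with its norm topology rather than in the Fr\'echet space $H_{b}(B_{H})$ with the topology of bounded convergence. The plan is first to record a norm version of Lemma~\ref{johannchristian}, and then to apply it to the conjugated symbol $\phi = \alpha_{a} \circ \varphi \circ \alpha_{a}$, for which Proposition~\ref{servicio} already delivers norm convergence $C_{\phi^{n}} \to C_{0}$.

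Concretely, I would proceed as follows. By the Earle--Hamilton theorem there is a unique fixed point $a \in B_{H}$ with $\varphi(a)=a$. Set $\phi = \alpha_{a} \circ \varphi \circ \alpha_{a}$; since $\alpha_{a}^{-1}=\alpha_{a}$ we have $\phi(0)=\alpha_{a}(\varphi(\alpha_{a}(0)))=\alpha_{a}(\varphi(a))=\alpha_{a}(a)=0$, and using $\varphi(B_{H})\subseteq rB_{H}$ together with Lemma~\ref{auto rB} one finds $0<\varepsilon<1$ with $\phi(B_{H})\subseteq(1-\varepsilon)B_{H}$. Thus $\phi$ satisfies the hypotheses of Proposition~\ref{servicio}, giving $C_{\phi^{n}}\to C_{0}$ in the operator norm of $L(H^{\infty}(B_{H}))$. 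The remaining point is the norm analogue of Lemma~\ref{johannchristian}: since $\alpha_{a}$ is an automorphism of $B_{H}$, the operator $C_{\alpha_{a}}$ is an isometric (indeed invertible, with $C_{\alpha_{a}}^{-1}=C_{\alpha_{a}}$) bijection of $H^{\infty}(B_{H})$, so $C_{\alpha_{a}}$ and $C_{\alpha_{a}}^{-1}$ are norm-bounded. Writing $\varphi^{n}=\alpha_{a}\circ\phi^{n}\circ\alpha_{a}$ we get $C_{\varphi^{n}} = C_{\alpha_{a}}\circ C_{\phi^{n}}\circ C_{\alpha_{a}}$, and norm convergence is preserved under composition with fixed bounded operators, so $C_{\varphi^{n}}\to C_{\alpha_{a}}\circ C_{0}\circ C_{\alpha_{a}}$. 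Evaluating the limit, $C_{0}\circ C_{\alpha_{a}}=C_{\alpha_{a}(0)}=C_{a}$ and then $C_{\alpha_{a}}\circ C_{a}=C_{a}$ (composition with a constant-valued symbol absorbs any outer composition), yielding $C_{\varphi^{n}}\to C_{a}$ in norm.

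Finally, the Ces\`aro means satisfy $(C_{\varphi})_{[n]}=\frac{1}{n}\sum_{k=0}^{n-1}C_{\varphi}^{k}$, and from $C_{\varphi^{n}}=C_{\varphi}^{n}\to C_{a}$ in norm one obtains $(C_{\varphi})_{[n]}\to C_{a}$ in norm by the elementary fact that Ces\`aro averages of a norm-convergent sequence converge to the same limit. Norm convergence of the Ces\`aro means is precisely uniform mean ergodicity for the Banach space $H^{\infty}(B_{H})$, so $C_{\varphi}$ is uniformly mean ergodic.

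I do not expect any serious obstacle, since every ingredient is already in place: the only genuinely new observation is that the transfer argument of Lemma~\ref{johannchristian} goes through verbatim in the norm topology, which hinges solely on $C_{\alpha_{a}}$ being a bounded invertible operator on $H^{\infty}(B_{H})$ (it is in fact an isometry because $\alpha_{a}$ maps $B_{H}$ onto itself). The mildly delicate step is the algebra of the limit, namely correctly identifying $C_{\alpha_{a}}\circ C_{0}\circ C_{\alpha_{a}}=C_{a}$; this is where one must be careful that composition with a constant symbol on the outside leaves the constant unchanged, and I would verify it by evaluating on an arbitrary $f$ exactly as in the display in the proof of Lemma~\ref{johannchristian}.
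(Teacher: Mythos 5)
Your proposal is correct and is essentially the paper's own argument: the paper proves this proposition precisely by rerunning Lemma~\ref{johannchristian} and Proposition~\ref{enigma} in $H^{\infty}(B_{H})$, with Proposition~\ref{servicio} supplying the norm convergence $C_{\phi^{n}}\to C_{0}$ for the conjugated symbol $\phi=\alpha_{a}\circ\varphi\circ\alpha_{a}$, exactly as you do. Your identification of the limit, $C_{\alpha_{a}}\circ C_{0}\circ C_{\alpha_{a}}=C_{\alpha_{a}}\circ C_{\alpha_{a}(0)}=C_{\alpha_{a}}\circ C_{a}=C_{a}$, and the passage to Ces\`aro means match the paper's computation.
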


We have formulated our results for the open unit ball of a Banach (or Hilbert) space, mainly with the purpose of simplicity and to give a uniform presentation of our results. Our proofs, however, transfer with no extra 
effort, to some other, more general settings. Let us briefly point out how.
\begin{itemize}
\item A set $U$ is said to be holomophically convex if $\widehat{K}_{H(U)}$ (recall in \eqref{Fhull}) is compact for every compact set $K \subseteq U$ (see \cite[Definition~11.3]{mujica}). The
proof of Theorem~\ref{PBinH} transfers word by word if $B_{X}$ is replaced by a holomorphically convex set $U$.

\item The proof of Proposition~\ref{fix0} works exactly in the same way if $B_{X}$ is replaced by any absolutely convex open set $U$. Then, Theorem~\ref{pbddHb}, as well as Propositions~\ref{PbdMeHBB}, \ref{662} and Corollay~\ref{monteverdi} also hold for arbitrary absolutely convex open sets (note that \cite[Theorem~2.9]{galindolourencomoraes} also holds in this case).

\item The key element in the proofs of Propositions~\ref{phi-bar-Bso}, \ref{enigma} and~\ref{Convergence in H^inf} (stated for the open unit ball of a Hilbert space) is the existence of a family of biholomorphic automorphisms 
on the ball (as in \eqref{alpha}) satisfying \eqref{castillo}. Hilbert spaces are not the only examples of such a situation. In every $C^{*}$-algebra, for example, also such a family of automorphisms can be defined. In fact, there is a wider class of Banach spaces, known as $JB^{*}$-triples, that also have this property: if $X$ is a $JB^{*}$-triple, then there is a family of biholomorphic automorphisms $\{ \alpha_{a}  \}_{a \in B_{X}}$ on $B_{X}$ satisfying $\alpha_a(0)=a$, $\alpha_a(a)=0$, $\alpha_a^{-1}=\alpha_a$. The class of  $JB^{*}$-triples includes Hilbert spaces and $C^{*}$-algebras, but also wider classes such as $J^{*}$-algebras (closed subspaces of the space of operators between two Hilbert spaces $L(H_{1}, H_{2})$ which are closed under $T \rightsquigarrow T T^{*} T$, being $T^{*}$ the adjoint of $T$); the interested reader may find more information on the subject in \cite{harris1981,mellon2000}. 
Moreover, these automorphisms satisfy the corresponding analogue of Lemma~\ref{auto rB} \cite[Lemma~1]{mackeysevillavallejo2006}. So, the aforementioned results remain valid if $B_{H}$ is replaced by the open unit ball $B_{X}$ of a $JB^{*}$-triple (in particular a $C^{*}$-algebra) $X$.
\end{itemize}

Finally, we observe that some questions remain open. The first one is whether or not \eqref{eskenian} implies that the composition operator $C_{\varphi}$ is uniformly mean ergodic (that is, Proposition~\ref{enigma} extends to arbitrary Banach spaces).
It would also be interesting to find examples of the following situations:
\begin{enumerate}
\item A composition operator on $H(B_{X})$ which is mean ergodic but not uniformly mean ergodic.

\item A composition operator on $H(B_{X})$ which is mean ergodic but not power bounded.

\item A composition operator on $H^{\infty}(B_{X})$ which is mean ergodic but not uniformly mean ergodic.
\end{enumerate}

\textbf{Acknowledgements.} We are very grateful to Jos\'e Bonet for valuable suggestions about this work and to Pablo Galindo for pointing out an example of a Banach space which satisfies Corollary~\ref{monteverdi}. We also thank the referee for her/his careful reading and useful suggestions.
The research of the first author was partially supported by the project MTM2016-76647-P. The research of the second author was partially supported by the project GV Prometeo 2017/102. The research of the third author was supported by the project MTM2017-83262-C2-1-P.
%
%

\end{document}